\newtheorem{teo}{Theorem}[section]
\newtheorem{lemma}[teo]{Lemma}
\newtheorem{cor}[teo]{Corollary}
\newtheorem{prop}[teo]{Proposition}
\theoremstyle{definition}
\newtheorem{exem}[teo]{Example}
\newtheorem{obs}[teo]{Remark}
\def\blfootnote{\xdef\@thefnmark{}\@footnotetext} 
\date{}
\author{GILIARD SOUZA DOS ANJOS\footnote{This study was financed in part by the Coordena\c c\~ao de Aperfei\c coamento de Pessoal de N\'ivel Superior - Brasil (CAPES) - Finance Code 001}\\[0.2cm]Instituto de Matemática e Estatística - Universidade de São Paulo
\\Rua do Matão, 1010, 05508-090, São Paulo - SP, Brazil\\giliards@ime.usp.br}
\title{\Large \textbf{HALF-AUTOMORPHISM GROUP OF CHEIN LOOPS}}
\begin{document}

\maketitle

\begin{abstract} 
\noindent{}A bijection $f$ of a loop $L$ is a \emph{half-automorphism} if $f(xy)\in \{f(x)f(y),f(y)f(x)\}$, for any $x,y\in L$. A half-automorphism is \emph{nontrivial} when it is neither an automorphism nor an anti-automorphism. A \emph{Chein loop} $L=G\cup Gu$ is a Moufang loop constructed from a group $G$ and an element $u$ of order $2$ outside of $G$. In this paper, the half-automorphism group of finite Chein loops is described.

\end{abstract}

\noindent{}{\it Keywords}: Chein loop, half-isomorphism, half-automorphism, half-automorphism group.

\section{Introduction}
A \emph{loop} $( L,*) $ consists of a nonempty set $L$ with a binary operation $*$ on $L$ such that for each $ a, b \in L $ the equations $ a * x  =  b $ and $ y * a =  b$ have unique solutions, and there is an identity element $ 1 \in L$ such that $ 1 * x = x = x *  1$, for any $ x \in L $. 

A \emph{Moufang loop} is a loop that satisfies the identity $ x(y(xz))=((xy)x)z$. Every Moufang loop is \emph{diassociative}, that is, any two of its elements generate a group. The fundamental definitions and facts from loop theory and Moufang loops are found in \cite{B71,P90}.

Let $G$ be a group and $u$ be an indeterminate. On $M(G,2) = G \cup Gu$, define the operation $*$ by

\vspace{-0.5cm}
\begin{equation}
g*h = gh,\hspace{0.5cm} g*(hu) = (hg)u,\hspace{0.5cm} (gu)*h = (gh^{-1})u,\hspace{0.5cm} (gu)*(hu) = h^{-1}g\hspace{0.5cm}
\end{equation}

\noindent{}for any $g,h\in G$. Then, $(M(G,2),*)$ is a Moufang loop. Loops of this type are known as \emph{Chein loops} \cite{C74}. Note that $M(G,2)$ is associative if and only if $G$ is abelian.


Let $(L,*)$ and $(L',\cdot)$ be loops. A bijection $f: L \rightarrow L'$ is called a \emph{half-isomorphism} if  $f(x*y)\in\{f(x)\cdot f(y),f(y)\cdot   f(x)\}$, for any $ x, y \in L$. A half-isomorphism of a loop onto itself is a \emph{half-automorphism}. We say that a half-isomorphism is \emph{trivial} when it is either an isomorphism or an anti-isomorphism.

In 1957, Scott \cite{Sco57} showed that every half-isomorphism on groups is trivial. Recently, similar versions of this result were obtained for more structured classes of nonassociative loops. Kinyon, Stuhl and Vojt\v echovsk\' y \cite{KSV16} generalized Scott's result to a more general subclass of Moufang loops. Gagola and Giuliani \cite{GG19} proved that every half-automorphism of a finite simple Moufang loop is trivial. In \cite{GA192}, the authors showed that in a subclass of automorphic loops of odd order every half-automorphism is trivial. An \emph{automorphic loop} is a loop in which every inner mapping is an automorphism \cite{BP56}.

Gagola and Giuliani also investigated loops that have nontrivial half-automorphisms. In \cite{GG13}, these authors established conditions for the existence of nontrivial half-automorphisms for Chein loops. A similar work was made for certain automorphic loops of even order in \cite{GA19}, where the authors also described the half-automorphism group of these loops. This work continues the investigation of nontrivial half-automorphisms in Chein loops initiated by Gagola and Giuliani. The aim is to describe the half-automorphism group of Chein loops.

This paper is organized as follows: Section~\ref{sec2} presents the definitions and basic results about half-automorphisms on Chein loops. In section~\ref{sec3}, some properties of an important subgroup of the half-automorphism group of Chein loops are obtained. In section~\ref{sec4}, the half-automorphism group of finite Chein loops is described.

\section{Preliminaries}
\label{sec2}

Here, the required definitions and basic results on half-automorphisms of Chein loops are stated.

First, a description of the automorphism group of Chein loops made by Grishkov, Zavarnitsine and Giuliani \cite{GZG05} is presented. Let $G$ be a group and $L = M(G,2)$. Denote the automorphism group of $G$ by $Aut(G)$. For $\phi \in Aut(G)$ and $t\in G$, define $a_\phi,d_t: L \rightarrow L$ by:

\begin{center}
$a_\phi(gu^i) = (\phi(g)) u^i, \,\,\, i \in \{ 0,1\},$ \hspace{1cm} $d_t(gu^i) = \left\{\begin{array}{rl}
g, & i = 0, \\
(tg)u, & i = 1.
\end{array}\right.$
\end{center}

Let $H$ and $H'$ be groups and $\sigma: H \rightarrow Aut(H')$ be a homomorphism. The set $H\times H'$ with the operation $(x,y)\cdot (x',y') = (xx',y\sigma_x(y'))$, where $\sigma_x = \sigma(x)$, is a semidirect product between $H$ and $H'$, and it is denoted by $H \stackrel{\sigma}{\ltimes} H'$. If $\pi: Aut(G) \rightarrow Aut(G)$ is the identity mapping, we can construct the semidirect product $Aut(H)\stackrel{\pi}{\ltimes} H$, which is called the \textit{holomorph of $G$} and denoted by $Hol(G)$.

\begin{prop} 
\label{prop1}(\cite[Lemma 1]{GZG05}) Let $A = \{a_\phi \,\,|\,\, \phi \in Aut(G)\}$ and $D = \{d_t \,\, |\,\, t\in G\}$. Then:

\noindent{}a) $A,D \leq Aut(L)$, and $A\cap D = \{I\}$, where $I$ is the identity mapping of $L$,
\\
b) $a_\phi a_{\phi'} = a_{\phi \phi'} $, $d_td_{t'} = d_{tt'}$, $(a_\phi)^{-1} = a_{\phi^{-1}}$, $(d_t)^{-1} = d_{t^{-1}}$, and $ a_\phi d_t  =  d_{\phi(t)} a_\phi$,
\\
c) $A \cong Aut(G)$, $D \cong G$, and $AD \cong Hol(G)$.
\end{prop}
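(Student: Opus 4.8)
The plan is to verify the three parts essentially by direct computation using the defining operation $(1)$ of $M(G,2)$, organizing the work so that the semidirect-product structure emerges naturally. First I would check that each $a_\phi$ is an automorphism: since $a_\phi$ fixes the two cosets $G$ and $Gu$ setwise and acts as $\phi$ on the $G$-coordinate, one runs through the four cases of $(1)$ — e.g. $a_\phi((gu)*(hu)) = a_\phi(h^{-1}g) = \phi(h^{-1}g) = \phi(h)^{-1}\phi(g) = (\phi(g)u)*(\phi(h)u) = a_\phi(gu)*a_\phi(hu)$ — and similarly for the other three products; bijectivity is clear since $\phi$ is bijective. For $d_t$, the key point is that it fixes $G$ pointwise and sends $gu \mapsto (tg)u$, so I would again check the four cases; the only nonroutine ones are those involving two elements of $Gu$ or a mixed product, where the identity $d_t((gu)*(hu)) = d_t(h^{-1}g) = h^{-1}g$ must be matched against $d_t(gu)*d_t(hu) = (tg)u * (th)u = (th)^{-1}(tg) = h^{-1}t^{-1}tg = h^{-1}g$, and $d_t((gu)*h) = d_t((gh^{-1})u) = (tgh^{-1})u$ against $d_t(gu)*d_t(h) = (tg)u * h = (tgh^{-1})u$. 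That $d_t$ is a bijection follows because $d_{t^{-1}}$ is a two-sided inverse, which also gives part of (b).

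For part (a) it remains to see $A \cap D = \{I\}$: an element lying in both fixes $G$ pointwise (being some $d_t$) and acts as a group automorphism $\phi$ on $G$ (being some $a_\phi$), forcing $\phi = \mathrm{id}$, hence $t = 1$ and the map is $I$. For part (b), the composition laws $a_\phi a_{\phi'} = a_{\phi\phi'}$ and $d_t d_{t'} = d_{tt'}$ are immediate from the definitions (for the latter: $d_t d_{t'}(gu) = d_t((t'g)u) = (tt'g)u$), the inverse formulas follow, and the commutation relation $a_\phi d_t = d_{\phi(t)} a_\phi$ is checked on $G$ (both sides act as $\phi$) and on $Gu$: $a_\phi d_t(gu) = a_\phi((tg)u) = (\phi(t)\phi(g))u$ while $d_{\phi(t)} a_\phi(gu) = d_{\phi(t)}(\phi(g)u) = (\phi(t)\phi(g))u$.

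Finally, part (c): $A \cong Aut(G)$ via $a_\phi \mapsto \phi$ and $D \cong G$ via $d_t \mapsto t$ are isomorphisms by the homomorphism formulas in (b) together with the injectivity already noted. For $AD \cong Hol(G)$, I would use (b) to see that $AD$ is closed under products and inverses (so it is a subgroup), that $D \trianglelefteq AD$ by the commutation relation, and that $AD = A \ltimes D$ with $A$ acting on $D$ exactly as $Aut(G)$ acts on $G$; the map $a_\phi d_t \mapsto (\phi, t)$ is then well defined (by $A \cap D = \{I\}$, the factorization is unique) and is the desired isomorphism onto $Hol(G) = Aut(G) \stackrel{\pi}{\ltimes} G$. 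I do not expect a genuine obstacle here; the only mild care needed is bookkeeping in the four-case verifications for $a_\phi$ and $d_t$ and making sure the semidirect-product conventions match, which is why I would set up the case analysis on $(1)$ cleanly at the outset.
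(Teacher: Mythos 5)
The paper does not prove this proposition at all---it is quoted verbatim from \cite[Lemma 1]{GZG05}---so there is no in-paper argument to compare against; your direct case-by-case verification from the multiplication rules (1) is correct and is the standard proof. The one point that genuinely needs the care you flagged is the convention in (c): with the paper's definition $(\phi,t)\cdot(\phi',t')=(\phi\phi',t\phi(t'))$ for $Hol(G)=Aut(G)\stackrel{\pi}{\ltimes}G$ and the relation $a_\phi d_t=d_{\phi(t)}a_\phi$, one has $a_\phi d_t\, a_{\phi'} d_{t'}=a_{\phi\phi'}d_{\phi'^{-1}(t)t'}$, so the map $a_\phi d_t\mapsto(\phi,t)$ is not literally multiplicative; the isomorphism should instead be written on the normal form $d_t a_\phi\mapsto(\phi,t)$ (consistent with how the paper later treats $ADH$), which is multiplicative since $d_t a_\phi\, d_{t'} a_{\phi'}=d_{t\phi(t')}a_{\phi\phi'}$.
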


If there exists an abelian subgroup $G_0\leq G$ of index $2$ such that $G = G_0 \cup G_0v$, where $v\not \in G_0$, $v^2 = 1$ and $vgv = g^{-1}$, for all $g \in G_0$, then $G$ is a \textit{generalized dihedral group}. In this case, $L = M(G,2) = G_0K$, where $K = \{1,u,v,w=uv=vu\}$ is isomorphic to the Klein group. For $\alpha \in Aut(K) \cong S_3$, we can define an automorphism $s_\alpha$ of $L$ by $s_\alpha (gx) = g\alpha(x)$, for any $ x\in K, g\in G_0$. Define $S = \{s_\alpha \,\, | \alpha \in Aut(K)\}$. Thus, $S \cong S_3$.

\begin{teo} 
\label{teo1}(\cite[Theorem 1]{GZG05}) Let $G$ be a group. If $G$ is not a generalized dihedral group, then $Aut(M(G,2)) \cong Hol(G)$. If $G = G_0 \cup G_0v$ is a generalized dihedral group and $G_0$ is not a group of exponent $2$, then $Aut(M(G,2)) = ADS$.
\end{teo}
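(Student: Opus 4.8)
The plan is to establish both inclusions, $Aut(L)\supseteq AD$ (resp.\ $\supseteq ADS$) and $Aut(L)\subseteq AD$ (resp.\ $\subseteq ADS$), where $L=M(G,2)$. One direction is largely supplied by the preliminaries: Proposition~\ref{prop1} gives $AD\leq Aut(L)$ together with $AD\cong Hol(G)$, and in the generalized dihedral case the preliminaries already record that each $s_{\alpha}$ lies in $Aut(L)$ and that $S\cong S_{3}$; it then remains only to check that the product set $ADS$ is closed under composition and inversion, which follows from explicit relations among $a_{\phi}$, $d_{t}$ and $s_{\alpha}$ in the spirit of Proposition~\ref{prop1}(b). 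Hence $AD\leq Aut(L)$ and, in the dihedral case, $ADS\leq Aut(L)$, so all the work is in the reverse inclusion.

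For the reverse inclusion fix $f\in Aut(L)$. The decisive step is to show that $f$ leaves the subloop $G$ invariant --- or, in the generalized dihedral case, that $f$ permutes the three index-$2$ subloops $G_{0}\langle u\rangle=M(G_{0},2)$, $G_{0}\langle v\rangle=G$ and $G_{0}\langle w\rangle$. Granting this, I would finish as follows. If $f(G)=G$, set $\phi:=f|_{G}\in Aut(G)$ and $\psi:=a_{\phi}^{-1}f$, so that $\psi$ fixes $G$ pointwise; since $f$ is a bijection it maps $Gu$ onto $Gu$, say $\psi(u)=tu$ with $t\in G$, and then for every $h\in G$ we get $\psi(hu)=\psi(h\ast u)=\psi(h)\ast\psi(u)=h\ast tu=(th)u=d_{t}(hu)$, while $\psi$ and $d_{t}$ agree on $G$; thus $\psi=d_{t}$ and $f=a_{\phi}d_{t}\in AD\cong Hol(G)$. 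In the dihedral case, if $f(G)$ is one of the other two subloops, composing $f$ with a suitable $s_{\alpha}\in S$ that carries that subloop back to $G$ reduces to the case just treated, giving $f\in ADS$.

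So the real content --- and the origin of the dichotomy in the statement --- is the invariance claim, which I expect to be the main obstacle. I would attack it via the involution $f(u)$: since $(gu)\ast(gu)=g^{-1}g=1$, every element of $Gu$ is an involution, so $f(u)$, being of order $2$, lies either in $Gu$ or in $G$. A centralizer count separates the two possibilities: an involution $w\in G$ commutes with every element of $Gu$ (by the multiplication rules), so $|C_{L}(w)|=|G|+|C_{G}(w)|$, whereas $|C_{L}(u)|=2\,|\{h\in G:h^{2}=1\}|$; hence $f(u)\in G$ would force more than half of $G$ to consist of elements of order at most $2$, and for a non-elementary-abelian group this forces $G$ to be generalized dihedral. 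Therefore, when $G$ is not generalized dihedral, $f(u)\in Gu$, and the identity $g=(gu)\ast u$ (valid for all $g\in G$, since $(gu)\ast(1\cdot u)=g$) then promotes this to $f(Gu)=Gu$, whence $f(G)=G$. When $G$ is generalized dihedral, $f(u)$ may land among the ``reflections'' $G_{0}v$, and one must then verify, by tracking involutions together with a characteristic subloop such as the associator subloop $A(L)=G'$ (which pins $G_{0}$ down intrinsically), that $f$ necessarily permutes the three listed index-$2$ subloops. Finally, the hypothesis that $G_{0}$ is not of exponent $2$ must be emphasised: if it were, $G$, $M(G_{0},2)$ and hence $L$ would all be elementary abelian $2$-groups, $Aut(L)$ would be a general linear group properly containing $ADS$, and the centralizer argument above would fail --- which is exactly why that degenerate case is excluded. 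The fiddly bookkeeping with involutions and centralizers in the case $f(u)\in G$ is, I expect, the hardest part.
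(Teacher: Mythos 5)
First, note that the paper does not prove this statement: Theorem~\ref{teo1} is imported verbatim from \cite[Theorem~1]{GZG05}, so there is no internal proof to compare against. Judged on its own terms, your architecture is the natural one and almost certainly matches the cited source in outline: $AD\leq Aut(L)$ with $AD\cong Hol(G)$ comes from Proposition~\ref{prop1}, and your reduction ``if $f(G)=G$ then $\psi=a_{\phi}^{-1}f$ fixes $G$ pointwise, sends $u$ to some $tu$, and hence equals $d_t$'' is correct (indeed $\psi(hu)=h\ast(tu)=(th)u=d_t(hu)$). The problem is the step you yourself identify as the crux.

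Your key lemma --- that $f(u)\in G$ forces more than half of $G$ to satisfy $x^{2}=1$, and that this (for non-elementary-abelian $G$) forces $G$ to be generalized dihedral --- is false as stated. Take $G=D_4\times D_4$: it has $36$ of its $64$ elements satisfying $x^{2}=1$, yet its abelian subgroups have order at most $16$, so it has no abelian subgroup of index $2$ and is not generalized dihedral. (Incidentally, elementary abelian $2$-groups \emph{are} generalized dihedral, so your caveat does not rescue the implication.) What your centralizer count actually yields is the equation $2\,|T|=|G|+|C_G(w)|$ with $T=\{x\in G: x^2=1\}$ and $w=f(u)$ an involution of $G$; the inequality $|T|>|G|/2$ alone is not enough, and extracting ``generalized dihedral'' from the full equation (if it can be done) is precisely the missing content. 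Two smaller gaps: the inference $f(u)\in Gu\Rightarrow f(Gu)=Gu$ does not follow from the identity $g=(gu)\ast u$ (that argument is circular); one must run the centralizer comparison for every $gu$, which does work since $|C_L(gu)|=2|T|$ for all $g$. And in the generalized dihedral case you only gesture at ``tracking involutions together with a characteristic subloop,'' so that half of the statement is not argued at all. As it stands the proposal is a plausible strategy with the decisive lemma unproved and, in the form you state it, incorrect.
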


Note that if $G = G_0 \cup G_0v$ is a generalized dihedral group and $G_0$ is not a group of exponent $2$, then $AD \cap S = \{I,d_v\}$. Thus, $|Aut(M(G,2))| = 3\,|Hol(G)| = 3\,|Aut(G)|\,|G|$.

Denote the sets of the half-automorphisms and trivial half-automorphisms of the loop $L$ by $\mathit{Half}(L)$ and $\mathit{Half_T}(L)$, respectively.

In any loop $Q$, it is easy to see that the composition of two half-automorphisms is also a half-automorphism, the identity mapping of $Q$ is a half-automorphism and the inverse mapping of a trivial half-automorphism is also a trivial half-automorphism. Then, $\mathit{Half_T}(Q)$ is always a group, and it is called the \emph{trivial half-automorphism group of $Q$}.

Let $f$ be a half-automorphism of a diassociative loop $M$. In \cite[Lemma 2.1]{KSV16}, it was proved that if $x,y\in M$ commute, then $f(x)$ and $f(y)$ commute. Thus, $f^{-1}$ is a half-automorphism by \cite[Theorem 2.5]{GA192}. Hence, $\mathit{Half}(M)$ is a group, and it is called the \emph{half-automorphism group of $M$}.

We conclude that $\mathit{Half}(L)$ and $\mathit{Half_T}(L)$ are groups for every Chein loop $L$.

In the following, some results about half-automorphisms in Chein loops obtained by Gagola and Giuliani in \cite{GG13} are presented.

Denote the order of the element $x$ of a group $G$ by $o(x)$ and the center of $G$ by $\mathcal{Z}(G)$.

\begin{teo} 
\label{teo2}(\cite[Theorem 1]{GG13}) Let $G$ be a finite nonabelian group and $L = M(G,2)$. If $L$ has a nontrivial half-automorphism, then there exists an element $x\in G$ such that:

\noindent{}a) $o(x) = 4$,
\\
b) $x^2\in \mathcal{Z}(G)$,
\\
c) $x^{-1}gx = g^{-1}$ for any element $g\in G$ such that $o(g)\not \in\{2,4\}$.
\end{teo}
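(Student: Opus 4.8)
The plan is to start from a nontrivial half-automorphism $f$ of $L = M(G,2)$ and progressively normalize it, using Scott's theorem together with the description of $Aut(L)$ in Theorem~\ref{teo1}. Since $G$ is a subgroup of $L$ and any two elements of the (diassociative) Moufang loop $L$ generate a group, the restriction $f|_G$ — once we know $f(G)=G$ — is a half-isomorphism of a group, hence trivial by Scott's result. Establishing $f(G)=G$ is the first point I would nail down: the idea is that $G$ is recoverable from $L$ (for nonabelian $G$, $G$ is not of exponent $2$, and one can argue from the fact that half-automorphisms preserve orders and commuting pairs, or identify $Gu$ with a suitable set of involutions), so $f$ permutes $G$ and $Gu$. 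Writing $\phi$ for the automorphism or anti-automorphism $f|_G$ furnished by Scott, I would replace $f$ by $a_{\phi^{-1}}\circ f$ in the automorphism case, or by $a_{\phi^{-1}}\circ J\circ f$ in the anti-automorphism case, where $J\colon x\mapsto x^{-1}$ is the inversion anti-automorphism of $L$; composing a half-automorphism with an (anti-)automorphism of $L$ again yields a half-automorphism and preserves (non)triviality. This reduces us to $f|_G = \mathrm{id}_G$. Since $G$ is nonabelian, no anti-automorphism of $L$ restricts to the identity on $G$, so after this reduction $f$ is automatically not an anti-automorphism, and nontriviality of $f$ simply says $f\notin Aut(L)$.

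Next I would analyze $f$ on $Gu$. Writing $f(gu)=\theta(g)u$ and substituting into $f(\alpha*\beta)\in\{f(\alpha)*f(\beta),\,f(\beta)*f(\alpha)\}$ for the three mixed product types $g*(hu)$, $(gu)*h$, $(gu)*(hu)$, using the defining formulas for $*$, the specialization $h=1$ gives $\theta(g)\in\{\theta(1)g,\theta(1)g^{-1}\}$; replacing $f$ by $d_{t^{-1}}\circ f$ with $t=\theta(1)$ (still fixing $G$ pointwise) I may assume $\theta(g)\in\{g,g^{-1}\}$ for all $g$. Set $N=\{g\in G:\,o(g)>2,\ \theta(g)=g^{-1}\}$. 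By Theorem~\ref{teo1}, an automorphism of $L$ fixing $G$ pointwise must be some $d_t$, so $f\in Aut(L)$ would force $N=\varnothing$; hence $N\neq\varnothing$. Pick $x\in N$ (a priori possibly after replacing an arbitrary element of $N$ by a suitable power or product; see below).

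Then I would mine the remaining instances of the half-automorphism condition for this $x$. Revisiting $(gu)*(hu)=h^{-1}g$ with $g=x$ and general $h$, and separating the two branches, yields: for every $h$ with $\theta(h)=h$ one gets $x^{-1}hx=h^{-1}$ (so $x$ inverts the ``$+$'' part of $G$, its involutions included), and for $h\in N$ one gets $h^{2}=x^{2}$ or $[h,x]=1$. In every case $x^{2}$ centralizes $h$, which already delivers part (b): $x^{2}\in\mathcal{Z}(G)$. For part (a), taking $g=h=x$ in $g*(xu)$ forces $(x^{2})^{\pm1}\in\{1,x^{-2}\}$, hence $x^{-2}=(x^{2})^{\varepsilon}$ since $o(x)>2$; pushing the same analysis through $x^{2},x^{3},\dots$ and combining with the global coherence of the ``direction'' of $f$ on triples of elements should pin down an element of order exactly $4$ (this is where replacing $x$ by a power of, or product involving, an element of $N$ may be needed). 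For part (c), a $g$ with $o(g)\notin\{2,4\}$ cannot sit in the ``commute / equal square'' alternative without contradicting the global consistency of the branch choices on triples $\{g,x,u\}$, so the branch $x^{-1}gx=g^{-1}$ must hold; the orders $2$ and $4$ are genuinely exceptional, since then $g$ and $g^{-1}$ become indistinguishable to $f$ ($\theta$ is ambiguous on involutions, and $x^{2}\in\langle g\rangle$ when $o(g)=4$).

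The main obstacle I anticipate is twofold. First, the careful bookkeeping of which of the two branches of the half-automorphism condition holds for each pair, and — crucially — propagating these local choices to a global contradiction: this is what ultimately forces the rigid generalized-dihedral-type picture and is the heart of the argument, whereas part (b) falls out cleanly once the normalization is in place. Second, pinning down that $x$ can be chosen of order exactly $4$ (not merely of order divisible by $4$) and that a single such $x$ simultaneously inverts every element of order $\notin\{2,4\}$. The reduction step $f(G)=G$ is comparatively routine but needs some care, since $G$ is not visibly characteristic in $L$.
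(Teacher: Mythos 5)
First, a caveat: the paper does not prove this statement at all --- it is quoted verbatim from \cite[Theorem 1]{GG13}, and the only ingredients the paper itself supplies are the reduction to the subgroup $H$ (Proposition~\ref{prop3}, also imported from \cite{GG13}) and the order-$4$ fact recorded as Proposition~\ref{prop4}, again imported without proof. That said, your normalization strategy is exactly the one underlying Proposition~\ref{prop3} (compose with $a_{\phi^{-1}}$, with $J$ in the anti-automorphism case, and with a suitable $d_t$, to land in $H$), and your mixed-product computations are precisely those that reappear in the paper's own Proposition~\ref{prop6}: for $x$ with $\varphi(xu)=x^{-1}u$ and $h$ with $\varphi(hu)=hu$, the product $(xu)*(hu)$ forces $x^{-1}hx=h^{-1}$, and for $h$ in your set $N$ it forces $h^2=x^2$ or $[h,x]=1$, whence $x^2\in\mathcal{Z}(G)$. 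So part (b) and the conjugation identity in part (c) are correctly derived, modulo one missing input.

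That missing input is the genuine gap, and it is the heart of \cite[Theorem 1]{GG13}: you must show that every $g$ with $\varphi(gu)=g^{-1}u\neq gu$ has order exactly $4$ --- equivalently, Proposition~\ref{prop4}, that $\varphi\in H$ fixes $yu$ whenever $o(y)\neq 4$. Both remaining assertions collapse onto this claim: part (a) needs $N$ to contain an element of order $4$ (your computation with $g=h=x$ in $g*(hu)$ yields $(x^2)^{\varepsilon}\in\{1,x^{-2}\}$, which pins down $o(x)=4$ only in the branch $\varepsilon=+1$ and gives no information when $\varepsilon=-1$), and part (c) needs every $g$ with $o(g)\notin\{2,4\}$ to lie \emph{outside} $N$, since for $g\in N$ your analysis only yields that $x^2$ centralizes $g$, not that $x^{-1}gx=g^{-1}$. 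Your proposed remedy --- ``pushing the analysis through $x^2,x^3,\dots$'' and appealing to ``global coherence of the branch choices'' --- is not an argument: iterating the same products on powers of $x$ only shows that the powers of $x$ of order $>2$ again land in $N$ (or that $o(x)=4$), and no contradiction is extracted from an element of $N$ of order, say, $6$ or $8$. You have correctly located where the difficulty lives, but the proposal does not close it; everything else in the write-up is sound and matches the route the paper (via \cite{GG13}) actually takes.
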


Let $G$ be a group and $L = M(G,2)$. Define the set 

\begin{center}
$H = \{\varphi \in \mathit{Half}(L)\,\,|\,\,\varphi(u) = u \textrm{ and } \varphi(g)=g, \forall \,\,g\in G  \}$. 
\end{center}

Then, $H$ is a subgroup of $\mathit{Half}(L)$. Analyzing the proof of \cite[Lemma 5]{GG13}, this lemma can be rewritten as follows.

\begin{prop}
\label{prop3} (\cite[Lemma $5$]{GG13}) $\mathit{Half}(L) = \mathit{Half}_T(L)H$. In particular, if $L$ has a nontrivial half-automorphism, then there exists a nontrivial half-automorphism in $H$.
\end{prop}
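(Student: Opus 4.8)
The plan is to prove the factorization $\mathit{Half}(L)=\mathit{Half}_T(L)H$ by taking an arbitrary $\varphi\in\mathit{Half}(L)$ and exhibiting a trivial half-automorphism $\tau$ such that $\tau^{-1}\varphi$ fixes every element of $G$ and fixes $u$; this element then lies in $H$, so $\varphi=\tau(\tau^{-1}\varphi)\in\mathit{Half}_T(L)H$, and the reverse inclusion is immediate since both $\mathit{Half}_T(L)$ and $H$ are contained in the group $\mathit{Half}(L)$. The particular statement follows at once: if $\varphi$ is a nontrivial half-automorphism, write $\varphi=\tau\psi$ with $\tau\in\mathit{Half}_T(L)$ and $\psi\in H$; if $\psi$ were trivial then $\varphi=\tau\psi$ would be trivial (a product of two trivial half-automorphisms, using that $\mathit{Half}_T(L)$ is a group), a contradiction, so $\psi\in H$ is the desired nontrivial half-automorphism.

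The construction of $\tau$ is where I would lean on the analysis in \cite[Lemma 5]{GG13}. First I would record the standard structural fact (used there) that any half-automorphism $\varphi$ of $L=M(G,2)$ maps $G$ to $G$ and $Gu$ to $Gu$: indeed $G$ is characterized internally (for instance as the set of elements lying in the associative nucleus, or via commutation/association properties that half-automorphisms preserve by \cite[Lemma 2.1]{KSV16} and diassociativity), so $\varphi|_G$ is a half-automorphism of the group $G$, hence by Scott's theorem \cite{Sco57} it is an automorphism or anti-automorphism of $G$. Replacing $\varphi$ by $\varphi$ composed with the Chein-loop anti-automorphism if necessary, I may assume $\varphi|_G\in Aut(G)$, say $\varphi|_G=\phi$. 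Then $a_{\phi}^{-1}\varphi$ fixes $G$ pointwise and still maps $Gu$ to $Gu$; writing its value on $u$ as $tu$ for some $t\in G$, the map $d_t^{-1}a_\phi^{-1}\varphi$ fixes $G$ pointwise and sends $u$ to $u$. Since $a_\phi, d_t$ (and the anti-automorphism of $L$) are trivial half-automorphisms — $a_\phi,d_t\in Aut(L)\le\mathit{Half}_T(L)$ by Proposition~\ref{prop1} — the composite $\tau$ of the ones used is a trivial half-automorphism, and $\tau^{-1}\varphi\in H$, completing the argument.

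The main obstacle is the bookkeeping ensuring that $d_t^{-1}a_\phi^{-1}\varphi$ really does fix \emph{all} of $G$ pointwise and not merely $G$ setwise, and that its action on $Gu$ is forced once its action on $u$ is fixed. For the first point one checks $a_\phi^{-1}\varphi|_G=\phi^{-1}\phi=\mathrm{id}_G$ and that $d_t$ acts trivially on $G$, so precomposition with $d_t^{-1}$ does not disturb $G$; for the second, one uses that in $M(G,2)$ every element of $Gu$ is $g*u$ with $g\in G$ (since $g*u=gu$) together with the half-automorphism property: $(d_t^{-1}a_\phi^{-1}\varphi)(g*u)\in\{g* u,\,u* g\}=\{gu,\,gu\}$ because $g$ is fixed and $u$ is fixed — here the diassociativity/commutation preservation is what guarantees the two candidate images coincide or that the correct one is selected, which is precisely the content extracted from the proof of \cite[Lemma 5]{GG13}. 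I would also remark that the "analysis of the proof" alluded to in the excerpt is exactly this observation that the trivial half-automorphism produced can be taken inside $Aut(L)$, composed with at most one anti-automorphism, which is what licenses writing $\mathit{Half}(L)=\mathit{Half}_T(L)H$ rather than a weaker statement.

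Finally, for the "in particular" clause I would spell out the short contrapositive once more for clarity: a product of two trivial half-automorphisms is trivial because $\mathit{Half}_T(L)$ is a group (noted in the preliminaries), so if every element of $H$ were trivial then $\mathit{Half}(L)=\mathit{Half}_T(L)H\subseteq\mathit{Half}_T(L)$ would force every half-automorphism of $L$ to be trivial, contradicting the hypothesis that $L$ possesses a nontrivial one; hence $H$ must contain a nontrivial half-automorphism.
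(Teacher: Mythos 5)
Your overall strategy is the right one and is essentially the route of \cite[Lemma 5]{GG13}: peel off an anti-automorphism $J$ if needed, then $a_\phi^{-1}$, then $d_t^{-1}$, and observe that what remains fixes $G$ pointwise and fixes $u$, hence lies in $H$; note that the paper itself gives no proof of this proposition but simply cites that lemma. However, the one genuinely hard step --- that every half-automorphism of $L=M(G,2)$ maps $G$ onto $G$ --- is exactly the step you wave at, and your proposed shortcut is wrong: $G$ is \emph{not} the associative nucleus of $M(G,2)$ (for $g,h\in G$ one has $(g*h)*u=(gh)u$ while $g*(h*u)=(hg)u$, so no non-central $g$ is nuclear). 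The correct internal characterization goes through commutation, e.g.\ comparing sizes of centralizers: every element of $Gu$ commutes with exactly the involutions of $G$ and with $|\{h\in G: h^2=1\}|$ elements of $Gu$, whereas an involution of $G$ commutes with all of $Gu$, and since $G$ is nonabelian these counts cannot match; combined with preservation of orders (elements of $Gu$ are involutions) this forces $\varphi(G)=G$. Deferring this to \cite{GG13} is legitimate, but the specific justification you offered would not survive scrutiny.

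A second, smaller error: you claim the action on $Gu$ is forced because $\{g*u,\,u*g\}=\{gu,gu\}$. In fact $u*g=(1u)*g=g^{-1}u$, so the two candidates are $gu$ and $g^{-1}u$ --- this is precisely displayed equation \eqref{eq1} of the paper, and the whole point of the nontrivial elements of $H$ is that they may send $gu$ to $g^{-1}u$. Fortunately this step is not needed at all: membership in $H$ only requires fixing $u$ and fixing $G$ pointwise, which your $d_t^{-1}a_\phi^{-1}(J)\varphi$ already does, so the factorization $\mathit{Half}(L)=\mathit{Half}_T(L)H$ and the ``in particular'' clause go through as you argue once the $\varphi(G)=G$ step is properly sourced.
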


In the proof of Theorem \ref{teo2}, the authors in \cite{GG13} proved the following result about the elements of $H$.

\begin{prop}
\label{prop4} (\cite{GG13}) Let $\varphi \in H$ and $y\in G$. If $o(y)\not = 4$, then $\varphi(yu) = yu$.
\end{prop}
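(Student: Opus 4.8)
The plan is to first settle the trivial orders, pin $\varphi(yu)$ down to two candidates, and then contradict the assumption $\varphi(yu)=y^{-1}u$ by feeding $\varphi$ products that land in $G$, where it is the identity. (The hypothesis that $G$ is nonabelian, in force throughout the proof of Theorem~\ref{teo2}, is essential here: for $G=\mathbb{Z}_3$ one has $M(\mathbb{Z}_3,2)\cong S_3$, which carries an anti-automorphism fixing $\mathbb{Z}_3$ pointwise and $u$ while moving $gu$.) Since $y*u=yu$ and $u*y=y^{-1}u$, from $\varphi\in H$ one gets $\varphi(yu)=\varphi(y*u)\in\{\varphi(y)*\varphi(u),\varphi(u)*\varphi(y)\}=\{yu,y^{-1}u\}$. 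If $o(y)\le 2$ then $y=y^{-1}$ and this set is $\{yu\}$, so the claim holds; hence assume $o(y)\notin\{1,2,4\}$, so $o(y)>2$, and suppose for contradiction $\varphi(yu)=y^{-1}u$. Put $T=\{g\in G:\varphi(gu)=gu\}$ and $T'=G\setminus T$. Since $\varphi$ is a bijection with $\varphi(gu),\varphi(g^{-1}u)\in\{gu,g^{-1}u\}$, on each pair $\{gu,g^{-1}u\}$ the map $\varphi$ either fixes both or swaps both; hence $T$ is inversion-closed and contains every element of order $\le 2$, every $h\in T'$ has $o(h)>2$ with $\varphi(hu)=h^{-1}u$, and $y\in T'$.

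Three product computations drive the proof. (A) For $g\in G$ the element $(gu)*(yu)=y^{-1}g$ lies in $G$, hence is fixed by $\varphi$; expanding $\varphi((gu)*(yu))$ in its two possible ways and using $o(y)>2$ forces $ygy^{-1}=g^{-1}$ whenever $g\in T$. (B) Repeating (A) with any $h\in T'$ in place of $y$ (legitimate, since $o(h)>2$ and $\varphi(hu)=h^{-1}u$) shows every $g\in T$ is inverted by every $h\in T'$. (C) For $g,h\in T'$ the element $(gu)*(hu)=h^{-1}g$ is fixed by $\varphi$, and expanding gives $g^2=h^2$ or $gh=hg$. Combining (A) and (B), for $g\in T$ and $h\in T'$ one computes $(yh)g(yh)^{-1}=y(hgh^{-1})y^{-1}=yg^{-1}y^{-1}=(ygy^{-1})^{-1}=g$, so $yh$ commutes with every element of $T$. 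Taking $h=y$ gives that $y^2$ commutes with $T$; a short use of $y*(yu)=y^2u$ (ruling out the value $u$ by injectivity of $\varphi$, as $o(y)>2$) shows $y^2\in T'$, so $y^3=y\cdot y^2$ also commutes with $T$, and therefore $y=y^3(y^2)^{-1}$ commutes with $T$. But then every $g\in T$ satisfies $g=ygy^{-1}=g^{-1}$, i.e.\ $T$ is \emph{exactly} the set of elements of $G$ of order $\le 2$, and $T'$ is exactly the set of elements of order $>2$.

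Now I would invoke nonabelianity through (C). If $y\notin\mathcal{Z}(G)$, pick $g$ not commuting with $y$; then $g\notin T$, so $o(g)>2$, and (C) applied to the non-commuting pairs $(g,y)$ and $(g,y^{-1})$ gives $g^2=y^2$ and $g^2=y^{-2}$, whence $y^4=1$, contradicting $o(y)\notin\{1,2,4\}$. If $y\in\mathcal{Z}(G)$, choose non-commuting $a,b$ and, whenever one of them has order $\le 2$, replace it by its product with $y$; since $o(y)>2$ (so $y^2\ne 1$) this yields non-commuting $f,f'$ of order $>2$, hence in $T'$, and (C) applied to $(f,f')$ and to $(f,(f')^{-1})$ gives $f^2=(f')^2$ and then $o(f)=o(f')=4$; finally $fy\in T'$ (again of order $>2$) and does not commute with $f'$, so (C) gives $f^2y^2=(fy)^2=(f')^2=f^2$, i.e.\ $y^2=1$, a contradiction. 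Either way $\varphi(yu)=y^{-1}u$ is impossible, so $\varphi(yu)=yu$.

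The hard part is this structural endgame, together with recognising that it is unavoidable: one cannot win inside the subloop $\langle y,u\rangle=M(\langle y\rangle,2)$, because $\langle y\rangle$ is abelian and a Chein loop over an abelian group does admit a half-automorphism fixing the base group and $u$ while sending each $gu$ to $g^{-1}u$ — so the contradiction must come from the nonabelian part of $G$, which is exactly the role of (B), (C) and the centre/non-centre split. Everything else is bookkeeping, the two points needing care being the overlap of $T$ and $T'$ on elements of order $\le 2$ (there "inverted by $y$" degenerates to "commutes with $y$") and the verification that $y^2$, $fy$, $ay$, $by$ really do have order $>2$, which is the only place where the hypothesis $o(y)\notin\{1,2,4\}$ enters.
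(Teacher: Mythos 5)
The paper does not actually prove this proposition: it is imported verbatim from the proof of \cite[Theorem 1]{GG13}, so there is no in-paper argument to compare against. Judged on its own, your proof is correct and self-contained. The reduction to $\varphi(yu)\in\{yu,y^{-1}u\}$ and the computation in your step (A) are exactly the moves the paper itself makes (compare equation \eqref{eq1} and the proof of Proposition \ref{prop6}(a)), and your endgame, which extracts a contradiction from the nonabelianness of $G$ via the centre/non-centre split, is in the same spirit as the proof of Theorem \ref{prop7}. Two remarks. First, you are right that the nonabelian hypothesis is indispensable and correctly read it off from the context of Theorem \ref{teo2}: for abelian $G$ the map fixing $G\cup\{u\}$ and sending $gu\mapsto g^{-1}u$ is an anti-automorphism lying in $H$, so the statement fails there; flagging this is a genuine point in your favour. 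Second, the one verification you assert rather than write out --- that $fy$ has order greater than $2$ in the central case --- does go through, but only because $o(y)\notin\{1,2,4\}$ forces $o(y^2)\neq 2$ while $f^{-2}=f^2$ has order exactly $2$, so $(fy)^2=f^2y^2\neq 1$; this is worth spelling out since it is precisely the hinge on which the hypothesis $o(y)\neq 4$ turns, and an analogous check is needed for $y^2\in T'$ (where you do supply it via $y^4\neq 1$). With those details filled in, the argument is complete.
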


\section{The properties of the elements of H}
\label{sec3}

In this section, $G$ is considered as a finite nonabelian group such that $L = M(G,2)$ has a nontrivial half-automorphism.

Denote the identity mapping of $L$ by $I$. By Proposition \ref{prop3}, there exists $\varphi \in H$ such that $\varphi \not = I$.

\begin{prop}
\label{prop5} Let $\varphi \in H\setminus \{I\}$. Then $\varphi$ is a nontrivial half-automorphism.
\end{prop}
\begin{proof}
Since $G$ is nonabelian and $\varphi(g) = g$, for all $g\in G$, the mapping $\varphi$ is not an anti-automorphism. If $\varphi$ is an automorphism, then $\varphi(gu) = \varphi(g)*\varphi(u) = gu$, for all $g \in G$, and hence $\varphi = I$.
\end{proof}

Let $\varphi \in H$. Since $\varphi$ is a half-automorphism, we have $\varphi(gu) \in \{\varphi(g)*\varphi(u),\varphi(u)*\varphi(g)\}$, for all $g\in G$. Then,

\begin{equation}
\label{eq1}
\varphi(gu) \in \{gu,g^{-1}u\}, \textrm{ for all } g\in G.
\end{equation}

Since $\varphi$ is a bijection, it follows that $\varphi^2 = I$. Thus, $H$ is an abelian group of exponent $2$. Denote by $C_2^n$ the direct product of $n$ cyclic groups of order $2$. Hence, the following result is at hand.

\begin{prop}
\label{prop52} $H \cong C_2^r$, for some $r \geq 1$.
\end{prop}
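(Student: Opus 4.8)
The plan is to show that $H$ is an elementary abelian $2$-group and that it is nontrivial; both facts have essentially already been assembled in the text preceding the statement, so the proof is a short synthesis. First I would recall that for any $\varphi \in H$ and any $g \in G$ we have $\varphi(gu) \in \{gu, g^{-1}u\}$ by \eqref{eq1}. Applying $\varphi$ again and using that $\varphi$ fixes every element of $G$ pointwise and fixes $u$, we get $\varphi^2(gu) = \varphi(gu)$ or $\varphi^2(gu) = \varphi(g^{-1}u) \in \{g^{-1}u, gu\}$; in either case, since $\varphi$ is a bijection and $\varphi^2$ fixes all of $G$ and $u$, the only consistent possibility is $\varphi^2(gu) = gu$ for all $g$. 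Hence $\varphi^2 = I$, so every nonidentity element of $H$ has order $2$.

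Next I would observe that $H$ is abelian. Given $\varphi, \psi \in H$, both fix $G$ pointwise and fix $u$, and each sends $gu$ into $\{gu, g^{-1}u\}$; composing in either order, $(\varphi\psi)(gu)$ and $(\psi\varphi)(gu)$ both lie in $\{gu, g^{-1}u\}$ and are determined by whether an odd or even number of the two maps ``inverts'' the index $g$, which is order-independent. (Alternatively, a finite group of exponent $2$ is automatically abelian, so this is immediate once $\varphi^2 = I$ is known and $H$ is finite — and $H$ is finite because it embeds in the symmetric group on $L$, or more directly because each $\varphi \in H$ is determined by the subset $\{g \in G : \varphi(gu) = g^{-1}u\}$.) Therefore $H$ is a finite elementary abelian $2$-group, i.e. $H \cong C_2^r$ for some $r \geq 0$.

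Finally I would rule out $r = 0$. By hypothesis $G$ is a finite nonabelian group such that $L = M(G,2)$ has a nontrivial half-automorphism, so by Proposition~\ref{prop3} there is a nontrivial half-automorphism in $H$; in particular $H \neq \{I\}$. Combined with the structure result this gives $H \cong C_2^r$ with $r \geq 1$, as claimed.

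I do not expect a genuine obstacle here: every ingredient — the involutivity from \eqref{eq1}, the finiteness from the explicit parametrization of elements of $H$ by subsets of $G$, and the nontriviality from Proposition~\ref{prop3} — is already in hand, so the only care needed is to present the elementary-abelian conclusion cleanly. The one point worth stating explicitly, rather than glossing, is \emph{why} $H$ is finite, since ``abelian group of exponent $2$'' alone does not force $H \cong C_2^r$ without finiteness (an infinite-dimensional $\mathbb{F}_2$-vector space also has exponent $2$); the parametrization of each $\varphi\in H$ by the finite set $\{g\in G:\varphi(gu)=g^{-1}u\}\subseteq G$ settles this at once.
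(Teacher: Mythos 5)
Your proof is correct and follows essentially the same route as the paper: use \eqref{eq1} plus injectivity to get $\varphi^2 = I$, conclude that $H$ is an elementary abelian $2$-group, and invoke Proposition~\ref{prop3} for $H \neq \{I\}$. Your worry about finiteness is already handled by the standing assumption of Section~\ref{sec3} that $G$ is finite (so $L$ is finite and $H$ embeds in the finite symmetric group on $L$), which is why the paper does not comment on it.
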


Let $\varphi\in H\setminus \{I\}$. Define the set $\gamma_\varphi =\{ x\in G \,\,|\,\, \varphi(xu) = x^{-1}u, o(x) = 4\}$. By Theorem \ref{teo2}, $\gamma_\varphi \not = \emptyset$.

\begin{prop}
\label{prop6} Let $\varphi\in H\setminus \{I\}$. Then, for all $g,h \in G$ and $x\in \gamma_\varphi$:

\noindent{}(a) If $g\not \in \gamma_\varphi$, then $\varphi(gu) = gu$, $g^{-1} = x^{-1}gx$ and $o(xg) = 4$,
\\
(b) $x^{-1} \in \gamma_\varphi$,
\\
(c) $x^2 \in \mathcal{Z}(G)$,
\\
(d) If $o(g)\not = 4$, then $\gamma_\varphi g = g\gamma_\varphi = \gamma_\varphi$,
\\
(e) If $g,h\not \in \gamma_\varphi$ and $hg \in \gamma_\varphi$, then $o(g) = 4$.
\end{prop}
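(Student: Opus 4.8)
The plan is to work directly from the two structural facts we already have about $\varphi\in H\setminus\{I\}$: first, that $\varphi(gu)\in\{gu,g^{-1}u\}$ for every $g\in G$ (equation~\eqref{eq1}), so that $\gamma_\varphi$ is precisely the set of elements of order $4$ that $\varphi$ ``inverts'' in the $Gu$-coset; and second, that $\varphi$ is a half-automorphism, so for any $g,h$ we have $\varphi(g*(hu))\in\{\varphi(g)*\varphi(hu),\varphi(hu)*\varphi(g)\}$, i.e.\ $\varphi((hg)u)\in\{(h\,\varphi_1(g))u,\ldots\}$ where $\varphi(hu)$ is $hu$ or $h^{-1}u$. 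The key computational input is the multiplication $g*(hu)=(hg)u$ and $(gu)*(hu)=h^{-1}g$, which lets us turn statements about $\varphi$ on $Gu$ into multiplicative constraints in $G$. I would set up these two ``evaluation identities'' once at the start and then derive (a)--(e) in an order that lets each part feed the next.

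For part (a), I would take $g\notin\gamma_\varphi$. By \eqref{eq1}, either $\varphi(gu)=gu$, which is what we want, or $\varphi(gu)=g^{-1}u$ with $o(g)\neq 4$; in the latter case Proposition~\ref{prop4} forces $\varphi(gu)=gu$ directly, giving $o(g)\neq 4$ and actually that $g$ behaves trivially, but I will need to rule this out or absorb it --- more carefully, the real content of (a) is obtained by applying $\varphi$ to $(xu)*(gu)$ and to $(gu)*(xu)$ for $x\in\gamma_\varphi$. Since $\varphi(xu)=x^{-1}u$ and $\varphi(gu)=gu$, comparing $\varphi((xu)*(gu))=\varphi(g^{-1}x)=g^{-1}x$ with the two allowed values $(x^{-1}u)*(gu)=g^{-1}x^{-1}$ and $(gu)*(x^{-1}u)=xg^{-1}$ yields $g^{-1}x=g^{-1}x^{-1}$ or $g^{-1}x=xg^{-1}$; the first is impossible since $o(x)=4$, so $x$ centralizes $g^{-1}$... that is not quite $g^{-1}=x^{-1}gx$ either, so I will instead apply $\varphi$ to $(gu)*(xu)=x^{-1}g$, obtaining $\varphi(x^{-1}g)$, and since $x^{-1}g$ may or may not lie in $G$ we just get $x^{-1}g$; matching against $(gu)*(x^{-1}u)=xg$ and $(x^{-1}u)*(gu)=g^{-1}x$ gives $x^{-1}g=xg$ (impossible) or $x^{-1}g=g^{-1}x$, i.e.\ $g^{-1}x^{-1}=xg$... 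I expect the correct pairing to be $\varphi$ applied to $(xg)u = g*(xu)$, whose image must be one of $g*(x^{-1}u)=(x^{-1}g)u$ or $(x^{-1}u)*g=(x^{-1}g)u$ --- wait, both equal $(x^{-1}g)u$ after using $(gu)*h=(gh^{-1})u$ --- hmm, I must be careful: $g*(xu)=(xg)u$ but $\varphi(g)*\varphi(xu)=g*(x^{-1}u)=(x^{-1}g)u$ and $\varphi(xu)*\varphi(g)=(x^{-1}u)*g=(x^{-1}g^{-1})u$; since $xg\notin\gamma_\varphi$ would give $\varphi((xg)u)=(xg)u$ by \eqref{eq1}/Prop~\ref{prop4}, we'd need $xg=x^{-1}g$ or $xg=x^{-1}g^{-1}$, forcing $xg\in\gamma_\varphi$ (so $o(xg)=4$) and then $xg=x^{-1}g^{-1}$ gives $g^{-1}=x^{-1}(xg)x^{-1}\cdot$... this rearranges to $g^{-1}=x^{-1}gx$ as claimed. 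So the mechanism is: assume $xg\notin\gamma_\varphi$ and derive a contradiction, hence $xg\in\gamma_\varphi$, then read off the relation. Parts (b) and (c): (b) follows by applying $\varphi$ to $(xu)*(xu)=x^{-1}x=1$ or, better, noting $x^{-1}=x^3$ has order $4$ and using \eqref{eq1} together with $\varphi((x^{-1}u))$ --- I would compute $\varphi$ on $(x^{-1}u)=(x^2\cdot xu)$ hmm, rather use $x^{-1}u = (x^{-1})*u$ and the fact that $\varphi$ commutes with the order-$4$ structure; (c) is Theorem~\ref{teo2}(b) applied to $x$, or can be rederived from (a) by noting every $g\notin\gamma_\varphi$ satisfies $x^{-1}gx=g^{-1}$ so $x^2$ centralizes all such $g$, and the elements of $\gamma_\varphi$ form a small controlled set.

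Parts (d) and (e) are where the bookkeeping is heaviest, and I expect (d) to be the main obstacle. For (d), given $o(g)\neq 4$, I want $\gamma_\varphi g=\gamma_\varphi$. Take $x\in\gamma_\varphi$; I need $xg\in\gamma_\varphi$ and every element of $\gamma_\varphi$ is of this form. To show $xg\in\gamma_\varphi$, note $o(xg)$ cannot be forced to $4$ for free, so I would compute $\varphi$ on the product $(gu)*(x^{-1}u)$ or $g^{-1}*(xu)$ and play \eqref{eq1} against the half-automorphism alternatives; the subtlety is controlling $o(xg)$, for which I would use Theorem~\ref{teo2}(c) (elements of order $\notin\{2,4\}$ are inverted by $x$) together with (a). The cleanest route is probably: if $xg\notin\gamma_\varphi$ then by (a) $o(x\cdot(xg))=o(x^2g)=4$ and $(xg)^{-1}=x^{-1}(xg)x$, and combining with $o(g)\neq 4$ and (c) forces a contradiction on the order of $g$ (roughly, $x^2g$ having order $4$ while $x^2$ is central pins down $o(g)\in\{2,4,8,\ldots\}$ and the inversion relations collapse it). For (e): $g,h\notin\gamma_\varphi$ but $hg\in\gamma_\varphi$; apply $\varphi$ to $g*(hu)=(hg)u$. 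By (a), $\varphi(gu)=gu$ and $\varphi(hu)=hu$, and $\varphi((hg)u)=(hg)^{-1}u=g^{-1}h^{-1}u$ since $hg\in\gamma_\varphi$. The half-automorphism condition on $g*(hu)$ gives $g^{-1}h^{-1}u\in\{g*(hu),(hu)*g\}=\{(hg)u,(hg^{-1})u\}$, so $g^{-1}h^{-1}=hg$ or $g^{-1}h^{-1}=hg^{-1}$; if $o(g)\neq 4$, part (d) gives $g\in\gamma_\varphi g=\gamma_\varphi$, contradicting $g\notin\gamma_\varphi$ --- hence $o(g)=4$. That last deduction is short, so (e) is quick once (d) is in hand; accordingly I would prove the parts in the order (b), (c), (a), (d), (e), and budget most of the effort for the order-tracking in (d).
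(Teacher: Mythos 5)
Your overall strategy is the right one and matches the paper's: evaluate $\varphi$ on mixed products such as $(xu)*(gu)$ and $g*(xu)$, compare with the two half-automorphism alternatives, and use $x\neq x^{-1}$ to kill one branch. But the execution of part (a) contains a computational error that derails everything downstream. In your first (and correct) line of attack you need $\varphi(gu)*\varphi(xu)=(gu)*(x^{-1}u)$, which by the Chein multiplication $(gu)*(hu)=h^{-1}g$ equals $xg$, not $xg^{-1}$. With the correct value the comparison $g^{-1}x\in\{g^{-1}x^{-1},xg\}$ forces $g^{-1}x=xg$, i.e.\ $g^{-1}=xgx^{-1}$, which is exactly the claimed relation $x^{-1}gx=g^{-1}$ after conjugation, and then $(xg)^2=x(gx)g=x(xg^{-1})g=x^2$ gives $o(xg)=4$. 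Because of the miscomputation you abandoned this route and fell back on applying $\varphi$ to $g*(xu)=(xg)u$; that fallback does not work for (a): assuming $xg\notin\gamma_\varphi$ only yields $xg=x^{-1}g^{-1}$, hence $g^2=x^2$ and $o(g)=4$, which is \emph{not} a contradiction in (a) (the hypothesis is $g\notin\gamma_\varphi$, not $o(g)\neq 4$ --- an element of order $4$ can fail to lie in $\gamma_\varphi$). Moreover your rearrangement of $xg=x^{-1}g^{-1}$ gives $g^{-1}=x^2g$, which is not $g^{-1}=x^{-1}gx$, and in the remaining case $(xg)^{-1}=x^{-1}g^{-1}$ one only gets that $x$ and $g$ commute. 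The identity the paper uses for that second evaluation is the engine of part (d) (where $o(g)\neq4$ \emph{is} assumed, so the dichotomy does produce a contradiction), not of part (a).

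The remaining parts are left at the level of intentions rather than proofs. For (b) the actual argument is injectivity: $\varphi(x^{-1}u)\in\{x^{-1}u,xu\}$ by \eqref{eq1} and $x^{-1}u$ is already taken by $\varphi(xu)$; your suggested computations with $(xu)*(xu)$ do not lead anywhere. For (c), citing Theorem~\ref{teo2}(b) is not legitimate: that theorem asserts the \emph{existence} of one element with $x^2\in\mathcal{Z}(G)$, not that every $x\in\gamma_\varphi$ has this property; you must also handle $y\in\gamma_\varphi$ explicitly (the paper evaluates $\varphi((xu)*(yu))$ and checks that both branches give $x^2y=yx^2$), and ``the elements of $\gamma_\varphi$ form a small controlled set'' is not an argument. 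Part (d) you explicitly leave open; the ingredients you name ($(xg)^{-1}=x^{-1}(xg)x$ from (a) applied to $xg$, leading to $g^2=x^2$ and $o(g)=4$) do in fact close it, but you would also need the reverse inclusion via $g^{-1}$ and finiteness of $G$. Your reduction of (e) to (d) is a nice alternative to the paper's direct computation, but as written ``$g\in\gamma_\varphi g$'' is false (it would force $1\in\gamma_\varphi$); the correct statement is $h=(hg)g^{-1}\in\gamma_\varphi g^{-1}=\gamma_\varphi$, contradicting $h\notin\gamma_\varphi$. In summary: right framework, but part (a) as proposed is broken, and (b)--(d) are not yet proofs.
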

\begin{proof}

(a) Let $g \in G \setminus \gamma_\varphi$. By Proposition \ref{prop4} and \eqref{eq1}, $\varphi(gu) = gu$. Then,

\begin{center}
$g^{-1}x = \varphi((xu)*(gu)) \in \{\varphi(xu)*\varphi(gu),\varphi(gu)*\varphi(xu))\} = \{g^{-1}x^{-1},xg\}$.
\end{center}

Since $x\not = x^{-1}$, we have $g^{-1}x = xg$. Thus, $g^{-1} = xgx^{-1}$, and hence $g^{-1} = x^{-1}gx$. Furthermore, $xgxg = xgg^{-1}x = x^2$, and therefore $o(xg) = 4$.\\

\noindent{}(b) It follows from \eqref{eq1}.\\

\noindent{}(c) Let $g \in G \setminus \gamma_\varphi$. By (a), $xgx^{-1} = g^{-1} = x^{-1}gx$. Hence, $x^2g = gx^2$.

Now, let $y\in \gamma_\varphi$. Then,

\begin{center}
$y^{-1}x = \varphi((xu)*(yu)) \in \{\varphi(xu)*\varphi(yu),\varphi(yu)*\varphi(xu))\} = \{yx^{-1},xy^{-1}\}$.
\end{center}

Thus, either $y^{-1}x = yx^{-1}$, or $y^{-1}x = xy^{-1}$. In both cases, we can conclude that $x^2y = yx^2$. Therefore, $x^2 \in \mathcal{Z}(G)$.\\

\noindent{}(d) 
Since $x\in \gamma_\varphi$, we have

\begin{center}
$\varphi(g)*\varphi(xu) = g*(x^{-1}u) = (x^{-1}g)u$,\hspace{0.5cm} $\varphi(xu)*\varphi(g) = (x^{-1}u)*g = (x^{-1}g^{-1})u$
\\
and $\varphi(g*(xu)) = \varphi((xg)u) = (xg)^{\epsilon}u$,
\end{center}

\noindent{}where $\epsilon \in \{-1,1\}$. Thus, $(xg)^{\epsilon} \in \{x^{-1}g,x^{-1}g^{-1}\}$. If $\epsilon = 1$, then $xg = x^{-1}g^{-1}$ since $x\not = x^{-1}$, and so $g^2 = x^{-2} = x^2$. Hence, $o(g) = 4$, which is a contradiction. Therefore, $\epsilon = -1$. By (a), $g^{-1}x = xg\in \gamma_\varphi$.

Since $o(g^{-1})\not = 4$, we conclude that $gx\in \gamma_\varphi$. The rest of the claim follows from the fact that $G$ is finite.\\

\noindent{}(e) Let $g,h \in G \setminus\gamma_\varphi$ be such that $hg \in \gamma_\varphi$. Then,

\begin{center}
$\varphi(g)*\varphi(hu) = g*(hu) = (hg)u$,\hspace{0.5cm} $\varphi(hu)*\varphi(g) = (hu)*g = (hg^{-1})u$
\\
and $\varphi(g*(hu)) = \varphi((hg)u) = (hg)^{-1}u$.
\end{center}

Thus, $(hg)^{-1} \in \{hg,hg^{-1}\}$. Since $o(hg) = 4$, we have $g^{-1}h^{-1} = hg^{-1}$, and then $gh = h^{-1}g$. Hence, $hghg = h(h^{-1}g)g = g^2$ and we conclude that $o(g) = 4$.

\end{proof}

\begin{cor}
\label{cor1} There exists $y\in G$ such that:

\noindent{}(a) $o(y) = 4$ and 
\\
(b) if $y = gh$, for some $g,h\in G$, then either $o(g) = 4$, or $o(h) = 4$.
\end{cor}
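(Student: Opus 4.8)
The plan is to take $y$ to be any element of $\gamma_\varphi$. Since $\varphi\in H\setminus\{I\}$, Theorem~\ref{teo2} guarantees (as noted right after the definition of $\gamma_\varphi$) that $\gamma_\varphi\neq\emptyset$, so such a $y$ exists; and by the very definition of $\gamma_\varphi$ this $y$ satisfies $o(y)=4$, which is part (a). So the whole content of the corollary reduces to verifying part (b) for this choice of $y$.

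For part (b) I would suppose $y=gh$ with $g,h\in G$ and argue by cases on membership in $\gamma_\varphi$. If $g\in\gamma_\varphi$ then $o(g)=4$ by definition of $\gamma_\varphi$, and likewise if $h\in\gamma_\varphi$ then $o(h)=4$; either way we are done. In the remaining case $g,h\notin\gamma_\varphi$, and then $gh=y\in\gamma_\varphi$, so Proposition~\ref{prop6}(e) — applied with $(h,g)$ in the role of its $(g,h)$, i.e.\ with $h$ the factor sitting on the right of the product — gives $o(h)=4$. Hence in all cases $o(g)=4$ or $o(h)=4$, proving (b).

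I do not anticipate a real obstacle: the statement is essentially a repackaging of Proposition~\ref{prop6}(e) together with the observation that lying in $\gamma_\varphi$ forces order $4$. The only points needing a touch of care are (i) matching the order of the factors when invoking Proposition~\ref{prop6}(e), whose hypothesis is not symmetric in $g$ and $h$, and (ii) noting that the degenerate factorizations $y=y\cdot 1=1\cdot y$ cause no trouble, since $y\in\gamma_\varphi$ already has order $4$. As an alternative to using part (e), one could instead use Proposition~\ref{prop6}(d): if neither $o(g)$ nor $o(h)$ equals $4$, then $g\gamma_\varphi=\gamma_\varphi$, so $h=g^{-1}y\in g^{-1}\gamma_\varphi=\gamma_\varphi$, forcing $o(h)=4$ and contradicting $o(h)\neq 4$.
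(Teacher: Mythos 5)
Your proposal is correct and follows essentially the same route as the paper: pick $y\in\gamma_\varphi$ (nonempty by Theorem~\ref{teo2}), note $o(y)=4$ by definition of $\gamma_\varphi$, and reduce the nontrivial case $g,h\notin\gamma_\varphi$ to Proposition~\ref{prop6}(e), with the factor order correctly matched so that the conclusion lands on $o(h)=4$. The alternative via Proposition~\ref{prop6}(d) is a valid variant but not needed.
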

\begin{proof}
Fix $\varphi \in H \setminus \{I\}$. Then, $\varphi$ is a nontrivial half-automorphism and $\gamma_\varphi \not = \emptyset$. Let $y \in \gamma_\varphi$. Suppose there exist $g,h\in G$ such that $y = gh$, $o(g)\not = 4$ and $o(h)\not = 4$. Thus, $g,h\not \in \gamma_\varphi$. By the item (e) of Proposition \ref{prop6}, $o(h) = 4$, which is a contradiction.
\end{proof}

Suppose that $G = G_0\cup G_0v$ is a generalized dihedral group. Note that $o(gv) =2$, for any $g\in G_0$. Let $z \in G$ be of order $4$. Then, $z\in G_0$ and $z = (z^{-1}v)(z^2v)$. Therefore, there is no element of $G$ satisfying the condition of Corollary \ref{cor1}. Hence, the following result is at hand.

\begin{cor}
\label{cor2} If $G$ is a finite generalized dihedral group, then every half-automorphism of $M(2,G)$ is trivial.
\end{cor}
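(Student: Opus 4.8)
The plan is to argue by contradiction and reduce everything to Corollary~\ref{cor1}. Assume that $M(G,2)$ possesses a nontrivial half-automorphism. If $G$ is abelian, then $M(G,2)$ is associative, hence a group, and Scott's theorem \cite{Sco57} forces every half-automorphism to be trivial, a contradiction; so I may assume $G$ is nonabelian. Then the standing hypotheses of Section~\ref{sec3} are in force, and Corollary~\ref{cor1} supplies an element $y\in G$ with $o(y)=4$ having the property that in every factorization $y=gh$ at least one of $g$, $h$ has order $4$. The task is then to show that no element of a finite generalized dihedral group $G=G_0\cup G_0v$ can have this property, which will be the sought contradiction.

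First I would record that every element lying outside $G_0$ has order $2$: for $g\in G_0$, the relation $vgv=g^{-1}$ gives $(gv)^2=g(vgv)=gg^{-1}=1$, and $gv\neq 1$ because $v\notin G_0\ni 1$. In particular, any element of order $4$ lies in the abelian subgroup $G_0$, so $y\in G_0$. The heart of the argument is then the explicit factorization
\[
y=(y^{-1}v)(y^{2}v),
\]
which follows from $vy^{2}v=y^{-2}$ (apply $vgv=g^{-1}$ twice) together with $y^{4}=1$, since $(y^{-1}v)(y^{2}v)=y^{-1}(vy^{2}v)=y^{-1}y^{-2}=y^{-3}=y$. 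Both factors $y^{-1}v$ and $y^{2}v$ lie in $G_0v$, hence have order $2$ by the previous observation, so neither has order $4$. This contradicts property (b) of Corollary~\ref{cor1}, and the proof is complete.

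I do not anticipate any real obstacle here: once the abelian case is dispatched via Scott's theorem, everything is driven by the defining relations $v^{2}=1$ and $vgv=g^{-1}$ of a generalized dihedral group together with $o(y)=4$, and the only point requiring a moment's care is checking that the two displayed factors really fail to have order $4$ --- which is immediate since they belong to $G_0v$. It is worth noting that the argument in fact shows that the conclusion of Corollary~\ref{cor1} is unsatisfiable in any finite generalized dihedral group, so such loops $M(G,2)$ admit no nontrivial half-automorphism whatsoever.
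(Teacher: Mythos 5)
Your proof is correct and follows essentially the same route as the paper: the paper likewise observes that every element of $G_0v$ has order $2$ and uses the identical factorization $z=(z^{-1}v)(z^{2}v)$ to show no element of a generalized dihedral group can satisfy Corollary~\ref{cor1}. Your explicit treatment of the abelian case via Scott's theorem is a small but welcome addition, since the paper leaves that case implicit in the standing nonabelian hypothesis of Section~\ref{sec3}.
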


By Theorem \ref{teo2}, $|\mathcal{Z}(G)|\geq 2$. In the following result, this subgroup is described.

\begin{teo}
\label{prop7} $\mathcal{Z}(G) \cong C_2^m$, for some $m\geq 1$.
\end{teo}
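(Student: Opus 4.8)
The plan is to fix once and for all a nontrivial $\varphi\in H$ (available by Propositions~\ref{prop3} and~\ref{prop5}) together with an element $x\in\gamma_\varphi$ (available by Theorem~\ref{teo2}), and then to prove that every $z\in\mathcal{Z}(G)$ satisfies $z^2=1$. Since $|\mathcal{Z}(G)|\geq 2$ by Theorem~\ref{teo2}(b), this forces $\mathcal{Z}(G)\cong C_2^m$ with $m\geq1$. So I would argue by contradiction, assuming there is a central $z$ with $o(z)>2$, and split on the value of $o(z)$ and on the behaviour of $\varphi$ on $zu$ (which by \eqref{eq1} is either $zu$ or $z^{-1}u$).

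The easy half disposes of the case $\varphi(zu)=zu$, which by Proposition~\ref{prop4} holds automatically for every central $z$ of order $\neq 4$. Here I would just evaluate $\varphi\big((xu)*(zu)\big)$ in the two ways allowed by the half-automorphism property, using the Chein identities, $\varphi(xu)=x^{-1}u$, the fact that $\varphi$ fixes $G$ pointwise, and the centrality of $z$; this yields $z^{-1}x\in\{z^{-1}x^{-1},\,zx\}$. Since $z^2\neq 1$ excludes $z^{-1}x=zx$, we obtain $x=x^{-1}$, contradicting $o(x)=4$. The same computation also kills a central element of order $4$ whenever $\varphi(zu)=zu$.

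The only surviving possibility, and the real obstacle, is $o(z)=4$ with $\varphi(zu)=z^{-1}u$, i.e. $z\in\gamma_\varphi\cap\mathcal{Z}(G)$. Here I would push $z$ through Proposition~\ref{prop6}. By~\ref{prop6}(a), with $z$ in the role of $x$, centrality gives $g^2=1$ for every $g\notin\gamma_\varphi$; hence $\gamma_\varphi$ is exactly the set of order-$4$ elements and $N:=G\setminus\gamma_\varphi$ is exactly the set of elements of order $\leq 2$. By~\ref{prop6}(e), $N$ is closed under products, so $N$ is an elementary abelian $2$-subgroup, and by~\ref{prop6}(a) again every element of $N$ commutes with every element of $\gamma_\varphi$, so $N\leq\mathcal{Z}(G)$. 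Consequently every noncommuting pair lies in $\gamma_\varphi$; since $G$ is nonabelian, fix $a,b\in\gamma_\varphi$ with $ab\neq ba$. Evaluating $\varphi\big((au)*(bu)\big)$ two ways gives $b^{-1}a\in\{ba^{-1},\,ab^{-1}\}$, and $b^{-1}a=ab^{-1}$ would force $ab=ba$; hence $b^{-1}a=ba^{-1}$, that is $a^2=b^2$. So any two noncommuting elements of $\gamma_\varphi$ have equal squares.

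To finish, I would examine $za$: it is not in $N$ (otherwise $a=z^{-1}(za)$ would be central, contradicting $ab\neq ba$), so $za\in\gamma_\varphi$; and $za$ does not commute with $b$ because $z$ is central. Therefore $(za)^2=b^2$, while centrality of $z$ together with $a^2=b^2$ gives $(za)^2=z^2b^2$, whence $z^2=1$, contradicting $o(z)=4$. All cases having collapsed, $\mathcal{Z}(G)$ has exponent $2$, and the statement follows. I expect essentially all the difficulty to sit in the order-$4$ case: the work is in passing from "$z$ central of order $4$" to the structural dichotomy that $G$ is the disjoint union of the order-$4$ elements $\gamma_\varphi$ and a central elementary abelian $2$-subgroup $N$, after which the "equal squares" identity for noncommuting elements of $\gamma_\varphi$ makes the contradiction immediate.
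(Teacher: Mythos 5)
Your proof is correct, and it shares the paper's overall skeleton --- everything reduces to excluding a central element of $\gamma_\varphi$, after which Proposition~\ref{prop6}(a) applied to $z\notin\gamma_\varphi$ gives exponent $2$ --- but the way you exclude that element is genuinely different. Both arguments start from the same observation that a central $z\in\gamma_\varphi$ forces $g^2=1$ for every $g\in G\setminus\gamma_\varphi$, and both end by perturbing a non-commuting pair by $z$; what differs is the identity extracted in between. The paper notes that $g^2=1$ on $G\setminus\gamma_\varphi$ makes $\varphi(gu)=g^{-1}u$ hold for \emph{all} $g\in G$, tests the half-automorphism condition on the mixed products $(gu)*h$, and obtains the dichotomy ``$gh=hg$ or $h^{-1}=ghg^{-1}$'' for arbitrary $g,h$; substituting $yz$ for $y$ in a non-commuting pair then yields $z^2=1$. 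You instead package $G\setminus\gamma_\varphi$ into a central elementary abelian $2$-subgroup $N$ (using Proposition~\ref{prop6}(e) for closure under products), so that any non-commuting pair must lie in $\gamma_\varphi$, and you test the condition on products $(au)*(bu)$ to get the ``equal squares'' identity $a^2=b^2$ for non-commuting $a,b\in\gamma_\varphi$ --- essentially the same computation the paper performs inside the proof of Proposition~\ref{prop6}(c), redeployed here --- after which $z^2b^2=(za)^2=b^2$ finishes. Your route buys a cleaner structural picture of $G$ in this hypothetical case (a disjoint union of the order-$4$ elements and a central elementary abelian $2$-group), at the cost of a few extra reductions; the paper's route is shorter because its dichotomy applies to arbitrary pairs without first locating the non-commuting pair inside $\gamma_\varphi$. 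All the individual steps you cite (the Chein products, the exclusion of $za\in N$, the non-commutation of $za$ with $b$) check out.
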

\begin{proof} Fix $\varphi \in H\setminus \{I\}$. First, we will prove that $\mathcal{Z}(G) \cap \gamma_\varphi = \emptyset$.

Suppose there exists $z\in \mathcal{Z}(G) \cap \gamma_\varphi$. Let $g\in G \setminus \gamma_\varphi$. By the item (a) of Proposition \ref{prop6}, $g^{-1} = z^{-1}gz = g$. By \eqref{eq1}, $\varphi(gu) = g^{-1}u,$  for all $g \in G$. Consequently, for  $g,h \in G$, we have

\begin{center}
$\varphi(gu*h) = (hg^{-1})u$, $\varphi(gu)*\varphi(h) = (g^{-1}h^{-1})u$ and $\varphi(h)*\varphi(gu) = (g^{-1}h)u$.
\end{center}

Thus, either $hg = gh$, or $h^{-1} = ghg^{-1}$, for all $g,h \in G$. Since $G$ is nonabelian, there exist $x,y\in G$ such that $xy\not = yx$, and then $y^{-1} = xyx^{-1}$. Since $x(yz) \not = yxz = (yz)x$, we have $(yz)^{-1} = xyzx^{-1} = y^{-1}z$, and hence $y^{-1} = z^2y^{-1}$. Therefore, $o(z)= 2$, a contradiction with $z\in \gamma_\varphi$. We conclude that $\mathcal{Z}(G) \cap \gamma_\varphi = \emptyset$.


Now, let $z\in \mathcal{Z}(G)$ and $x\in \gamma_\varphi$. By the item (a) of Proposition \ref{prop6}, $z^{-1} = x^{-1}zx = z$. Thus, $\mathcal{Z}(G)$ has exponent $2$, and hence there exists $m\geq 1$ such that $\mathcal{Z}(G) \cong C_2^m$.
\end{proof}

\section{The half-automorphism group}
\label{sec4}

We say that a loop $Q$ has the \emph{antiautomorphic inverse property} (AAIP) if it has two-sided inverses and satisfies the identity $(xy)^{-1} = y^{-1}x^{-1}$. In this type of loop, the \emph{inversion mapping} $J: Q \rightarrow Q$, defined by $J(x) = x^{-1}$, is an anti-automorphism. Furthermore, it is easy to see that $J \in \mathcal{Z}(\mathit{Half_T}(Q))$.

\begin{prop}
\label{prop8} Let $Q$ be a nonabelian loop with the (AAIP). Then, 

\begin{center}
$\mathit{Half}_T(Q) = \left\langle J\right\rangle Aut(Q) \cong C_2 \times Aut(Q)$.
\end{center}
\end{prop}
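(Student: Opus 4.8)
The plan is to show that $\mathit{Half}_T(Q)$ consists precisely of the automorphisms and the anti-automorphisms of $Q$, and that the latter are exactly the maps $J\phi$ with $\phi \in Aut(Q)$. First I would recall that $\mathit{Half}_T(Q)$ is by definition $Aut(Q) \cup \mathit{Anti}(Q)$, where $\mathit{Anti}(Q)$ denotes the set of anti-automorphisms of $Q$; this is just unwinding the definition of trivial half-automorphism. Next, using the (AAIP), $J$ is an anti-automorphism, so $J \in \mathit{Half}_T(Q)$. Since the composition of an anti-automorphism with an anti-automorphism is an automorphism and the composition of an anti-automorphism with an automorphism (on either side) is again an anti-automorphism, I would argue that $\mathit{Anti}(Q) = J\,Aut(Q) = Aut(Q)\,J$: indeed, if $g$ is any anti-automorphism then $J^{-1}g = Jg$ is an automorphism $\phi$, so $g = J\phi$, and conversely every such $J\phi$ is an anti-automorphism. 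This gives $\mathit{Half}_T(Q) = Aut(Q) \cup J\,Aut(Q) = \langle J\rangle Aut(Q)$.

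To upgrade this to the internal-direct-product statement $\langle J\rangle Aut(Q) \cong C_2 \times Aut(Q)$, I would verify three things: (i) $\langle J\rangle = \{I, J\}$ has order $2$, which holds because $J^2 = I$ (two-sided inverses) and $J \neq I$ since $Q$ is nonabelian — if $J = I$ then $xy = (y^{-1}x^{-1})^{-1} = (J(y)J(x))^{-1}$... more directly, $J = I$ forces $x^{-1} = x$ for all $x$, whence $xy = (xy)^{-1} = y^{-1}x^{-1} = yx$, contradicting nonabelianness; (ii) $Aut(Q) \trianglelefteq \mathit{Half}_T(Q)$, which is immediate because $Aut(Q)$ has index $2$ (or directly: a conjugate of an automorphism by an anti-automorphism is an automorphism); (iii) $\langle J\rangle \cap Aut(Q) = \{I\}$, i.e. $J \notin Aut(Q)$, which again follows from nonabelianness since an automorphism that is also the inversion map would force $(xy)^{-1} = x^{-1}y^{-1} = y^{-1}x^{-1}$ for all $x,y$. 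Finally, $\langle J\rangle$ is central in $\mathit{Half}_T(Q)$ — this is the remark preceding the statement, $J \in \mathcal{Z}(\mathit{Half}_T(Q))$, which one checks by noting $gJg^{-1} = J$ for $g$ an automorphism and $gJg^{-1} = J$ for $g$ an anti-automorphism as well. With a normal complement that is centralized, the product is the internal direct product $\langle J\rangle \times Aut(Q) \cong C_2 \times Aut(Q)$.

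The only genuinely delicate point is making sure the index-$2$ argument is not circular: one must know a priori that $\mathit{Half}_T(Q) \neq Aut(Q)$, i.e. that $Q$ really does admit an anti-automorphism. But the (AAIP) hands this to us for free via $J$, so there is no obstacle — the whole argument is essentially bookkeeping about how automorphisms and anti-automorphisms compose. I would present it compactly: establish $\mathit{Half}_T(Q) = Aut(Q) \sqcup J\,Aut(Q)$, note $J^2 = I$, $J \neq I$, $J \notin Aut(Q)$ (all from nonabelianness plus (AAIP)), and $J$ central, then invoke the internal direct product criterion.
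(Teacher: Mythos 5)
Your proposal is correct and follows essentially the same route as the paper: decompose $\mathit{Half}_T(Q)$ into the disjoint union $Aut(Q)\cup Ant(Q)$, show $Ant(Q)=J\,Aut(Q)$ using the (AAIP), and obtain the direct product from $J$ being central of order $2$ with $J\notin Aut(Q)$. Your extra verifications (that $J\neq I$ and $J\notin Aut(Q)$ follow from nonabelianness) are details the paper leaves implicit, but the argument is the same.
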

\begin{proof} Since $J \in \mathcal{Z}(\mathit{Half}_T(Q))$, $J$ has order $2$ and $J\not \in Aut(Q)$, it follows that $\left\langle J\right\rangle Aut(Q) \cong C_2 \times Aut(Q)$. 

Denote the set of the anti-automorphisms of $Q$ by $Ant(Q)$. Since $Q$ is nonabelian, we have $\mathit{Half}_T(Q) = Aut(Q) \cup Ant(Q)$, where $Aut(Q) \cap Ant(Q) = \emptyset$. It therefore suffices to verify that $Ant(Q) = J \,Aut(Q)$.

Let $\varphi \in Aut(Q)$. Then, $J\varphi(xy) = J\varphi(y) J\varphi(x)$, for all $x,y \in G$, and so $J \,Aut(Q) \subset Ant(Q)$. On the other hand, let $\varphi \in Ant(Q)$. Then, $J\varphi(xy) = J\varphi(x) J\varphi(y)$,  for all $x,y \in G$. Thus, $J\varphi \in Aut(Q)$, and hence $\varphi \in J \,Aut(Q)$. 
\end{proof}

\begin{obs}
\label{obs1}
If $Q$ is an abelian loop, then $\mathit{Half}(Q) = \mathit{Half}_T(Q) = Aut(Q)$.
\end{obs}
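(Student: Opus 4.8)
The plan is to observe that in an abelian loop every half-automorphism is automatically an automorphism, so nothing nontrivial can happen. Concretely, let $Q$ be an abelian loop and let $f \in \mathit{Half}(Q)$. Since the operation on $Q$ is commutative, for any $x,y \in Q$ we have $f(x)f(y) = f(y)f(x)$, so the defining condition $f(xy) \in \{f(x)f(y), f(y)f(x)\}$ collapses to $f(xy) = f(x)f(y)$. Hence $f$ is an automorphism, which gives $\mathit{Half}(Q) \subseteq Aut(Q)$. The reverse inclusions $Aut(Q) \subseteq \mathit{Half}_T(Q) \subseteq \mathit{Half}(Q)$ are immediate from the definitions, so all three sets coincide.

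Since this is a one-line consequence of commutativity, there is no real obstacle; the only thing to be careful about is that the equality $\mathit{Half}_T(Q) = Aut(Q)$ should also note that an anti-automorphism of an abelian loop is the same thing as an automorphism (again because $y^{-1}x^{-1} = x^{-1}y^{-1}$ when the operation is commutative, or more directly because $f(xy) = f(y)f(x) = f(x)f(y)$), so $Ant(Q) = Aut(Q)$ and the trivial half-automorphism group is just $Aut(Q)$ as well. This remark presumably serves only to record the degenerate case excluded from Proposition \ref{prop8}, where the hypothesis that $Q$ be nonabelian is used to split $\mathit{Half}_T(Q)$ into the disjoint union $Aut(Q) \cup Ant(Q)$.
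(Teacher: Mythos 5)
Your proof is correct and is exactly the immediate argument the paper has in mind (the Remark is stated without proof precisely because commutativity makes $f(x)f(y)=f(y)f(x)$, collapsing the half-automorphism condition to the automorphism condition and identifying $Ant(Q)$ with $Aut(Q)$). Nothing further is needed.
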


Every Moufang loop has the (AAIP). Thus, Theorem \ref{teo1}, Proposition \ref{prop8} and Remark \ref{obs1} give us a description of the trivial half-automorphism group of Chein loops.

From now on, we will suppose that the Chein loop $L = M(G,2)$ is finite and has a nontrivial half-automorphism. Then, $G$ is a finite nonabelian group that is not a generalized dihedral group.

By Theorem \ref{teo1} and Propositions \ref{prop3} and \ref{prop8}, $\mathit{Half}(L) = \left\langle J\right\rangle ADH$, where $A$ and $D$ were defined in section \ref{sec2}, and $J$ is the inversion mapping of $L$.

\begin{lemma}
\label{lema1} Let $\varphi \in H$, $\phi \in Aut(G)$, and $t\in G$. Then:

\noindent{}(a) $(a_\phi)^{-1} \varphi a_\phi\in H$,
\\
(b) If $t\in G - \gamma_\varphi$, then $(d_t)^{-1}\varphi d_t \in H$. If $t\in \gamma_\varphi$, then $d_t \varphi d_t \in H$.
\end{lemma}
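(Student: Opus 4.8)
The claim is that conjugating an element $\varphi\in H$ by an element of $A$ or by an element of $D$ again lands in $H$, with the caveat that for $t\in\gamma_\varphi$ one must use $d_t\varphi d_t$ rather than $(d_t)^{-1}\varphi d_t$ (which is harmless since $d_t^{-1}=d_{t^{-1}}$ and $t^{-1}\in\gamma_\varphi$ by Proposition~\ref{prop6}(b), so $d_t\varphi d_t=(d_{t^{-1}})^{-1}\varphi d_{t^{-1}}$ anyway). The key observation is that membership in $H$ is characterized by two conditions: the map fixes every $g\in G$ pointwise and fixes $u$; and it is a half-automorphism. The composite of half-automorphisms is a half-automorphism and $A,D\le Aut(L)\subset\mathit{Half}(L)$, so in all three cases the composite is automatically a half-automorphism. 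Hence the whole proof reduces to checking the fixing conditions on $G$ and on $u$.

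\textbf{Part (a).} For $\psi:=(a_\phi)^{-1}\varphi a_\phi = a_{\phi^{-1}}\varphi a_\phi$, I compute directly. For $g\in G$: $a_\phi(g)=\phi(g)\in G$, so $\varphi a_\phi(g)=\phi(g)$ since $\varphi$ fixes $G$ pointwise, and then $a_{\phi^{-1}}(\phi(g))=\phi^{-1}(\phi(g))=g$. For $u$: $a_\phi(u)=u$ (take $g=1$), $\varphi(u)=u$, and $a_{\phi^{-1}}(u)=u$. Thus $\psi$ fixes $G$ pointwise and fixes $u$, so $\psi\in H$.

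\textbf{Part (b).} Here the verification is more delicate because $d_t$ moves elements of $Gu$ and, unlike $a_\phi$, need not fix $u$; indeed $d_t(u)=tu$. I track an arbitrary $gu^i$ through the composite. On $G$ ($i=0$): $d_t$ fixes $g$, $\varphi$ fixes $g$, and $(d_t)^{-1}=d_{t^{-1}}$ fixes $g$ — so any composite of the stated forms fixes $G$ pointwise regardless of whether $t\in\gamma_\varphi$. The real content is the image of $u$. Trace $u$ through $d_t$: $d_t(u)=tu$; then apply $\varphi$: by~\eqref{eq1}, $\varphi(tu)\in\{tu,t^{-1}u\}$, and this is exactly where the dichotomy $t\in\gamma_\varphi$ versus $t\notin\gamma_\varphi$ enters. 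If $t\notin\gamma_\varphi$, then $o(t)\ne 4$ would give $\varphi(tu)=tu$ by Proposition~\ref{prop4}, and if $o(t)=4$ with $t\notin\gamma_\varphi$ then $\varphi(tu)=tu$ by definition of $\gamma_\varphi$ together with~\eqref{eq1}; either way $\varphi(tu)=tu$, and applying $d_{t^{-1}}$ sends $tu\mapsto (t^{-1}t)u=u$. Hence $(d_t)^{-1}\varphi d_t$ fixes $u$, so it lies in $H$. If $t\in\gamma_\varphi$, then $\varphi(tu)=t^{-1}u$, and now applying $d_t$ again gives $d_t(t^{-1}u)=(t\,t^{-1})u=u$; hence $d_t\varphi d_t$ fixes $u$ and lies in $H$. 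In the second case one also needs that the composite is still a half-automorphism: this holds because $d_t,d_t\in Aut(L)$ and $\varphi\in\mathit{Half}(L)$, and $\mathit{Half}(L)$ is closed under composition.

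\textbf{Main obstacle.} The only subtle point is the bookkeeping around $u$ in part (b) — specifically, recognizing that when $t\in\gamma_\varphi$ one must conjugate by $d_t$ on both sides (not invert one copy) in order to cancel the inversion introduced by $\varphi(tu)=t^{-1}u$; the rest is routine substitution into the definitions of $a_\phi$, $d_t$, and the operation $*$. A minor care point is confirming that in the excluded case (order of $t$ equal to $4$, $t\notin\gamma_\varphi$) one indeed has $\varphi(tu)=tu$: this is immediate from~\eqref{eq1}, since the only other option $t^{-1}u$ would force $t\in\gamma_\varphi$.
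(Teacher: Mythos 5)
Your proof is correct and takes essentially the same approach as the paper: since composites of half-automorphisms are half-automorphisms and $A,D\leq Aut(L)$, membership in $H$ reduces to checking that the composite fixes $G$ pointwise and fixes $u$, and you verify this by the same direct computations (citing \eqref{eq1} and Proposition \ref{prop4} where the paper cites Proposition \ref{prop6}(a), which amounts to the same thing). One caution about the aside in your opening paragraph: $(d_{t^{-1}})^{-1}\varphi d_{t^{-1}}$ equals $d_t\varphi d_{t^{-1}}$, not $d_t\varphi d_t$, and for $t\in\gamma_\varphi$ the genuine conjugate $(d_t)^{-1}\varphi d_t$ need not lie in $H$ (its value at $u$ is $t^2u$), so the asymmetric form $d_t\varphi d_t$ is essential rather than ``harmless'' --- but this slip is not used in your actual verification, which is sound.
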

\begin{proof} For $g\in G$, we have that $(a_\phi)^{-1}\varphi a_\phi (g)= \phi^{-1}\phi (g) = g$ and $(d_t)^{-1}\varphi d_t (g) = g = d_t \varphi d_t (g)$. 

Consider $u\in L$. Thus, $(a_\phi)^{-1} \varphi a_\phi(u) =  (a_\phi)^{-1} \varphi(u)= a_{\phi^{-1}}(u)  = u$. If $t\in  G \setminus \gamma_\varphi$, then $\varphi(tu) = tu$ by Proposition \ref{prop6}, and so $ (d_t)^{-1} \varphi d_t (u)= d_{t^{-1}}(tu) = u$. If $t\in  \gamma_\varphi$, then $\varphi(tu) = t^{-1}u$, and hence $d_t \varphi d_t (u)= d_t(t^{-1}u) = u$.
\end{proof}

By Proposition \ref{prop5}, $H\cap AD = \{I\}$. Since $H,AD \leq \mathit{Half}(L)$, it follows that $DH, ADH \leq \mathit{Half}(L)$ by Lemma \ref{lema1}. In \cite[Lemma 2.1]{KSV16}, it was proved that $fJ = Jf$, for all $f\in \mathit{Half}(L)$. Thus, $J\in \mathcal{Z}(\mathit{Half}(L))$. Furthermore, it is clear that $J\not \in ADH$. Consequently, we have

\begin{equation}
\label{eq2}
\mathit{Half}(L) \cong C_2 \times ADH.
\end{equation}

Now, we will describe the group $ADH$. For $a_\phi\in A$, define $\sigma_\phi: DH \rightarrow DH$ by $\sigma_\phi(d_t \varphi) = a_\phi d_t \varphi (a_\phi)^{-1} = d_{\phi(t)}a_\phi\varphi (a_\phi)^{-1}$, where $\varphi \in H$ and $d_t\in D$. By Lemma \ref{lema1}, $\sigma_\phi$ is well-defined, so it is the restriction of an inner automorphism of $ADH$ to $DH$. Therefore, $\sigma_\phi\in Aut(DH)$ and $\sigma_\phi\sigma_{\phi'} = \sigma_{\phi\phi'}$, for any $a_\phi,a_{\phi'}\in A$.

Define $\sigma: A \rightarrow Aut(DH)$ by $\sigma(a_\phi) = \sigma_\phi$. Then, the mapping $\sigma$ is a homomorphism. Thus, we can construct the semidirect product $A \stackrel{\sigma}{\ltimes} DH$, where the operation is given by

\begin{center}
$(a_\phi,d_t\varphi)\stackrel{\sigma}{\cdot} (a_{\phi'},d_{t'}\varphi') = (a_{\phi\phi'},d_t\varphi d_{\phi(t')}a_\phi\varphi' (a_\phi)^{-1})$.
\end{center}

In $ADH$, we have that $d_t\varphi a_\phi \, d_{t'}\varphi'a_{\phi}'= d_t\varphi d_{\phi(t')}a_\phi\varphi' (a_\phi)^{-1}a_{\phi\phi'}$, and then the mapping $\psi: ADH \rightarrow A \stackrel{\sigma}{\ltimes} DH$, defined by $\psi(d_t\varphi a_\phi) = (a_\phi,d_t\varphi)$, is an isomorphism. Therefore, we established the following result.

\begin{prop}
\label{prop9} $ADH \cong A \stackrel{\sigma}{\ltimes} DH \cong Aut(G) \stackrel{\sigma}{\ltimes} DH$.
\end{prop}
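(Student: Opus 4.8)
The plan is to build the group $A\stackrel{\sigma}{\ltimes} DH$ explicitly using $\sigma$ as defined just above, and to exhibit a concrete isomorphism $\psi\colon ADH \to A\stackrel{\sigma}{\ltimes} DH$. First I would record the normal form: by Lemma~\ref{lema1} and Proposition~\ref{prop1}(b), every element of $ADH$ can be written as $d_t\varphi a_\phi$ with $d_t\in D$, $\varphi\in H$, $a_\phi\in A$; here one pushes all $a$-factors to the right using $a_\phi d_t = d_{\phi(t)}a_\phi$ and $a_\phi \varphi = (a_\phi \varphi (a_\phi)^{-1})\,a_\phi$, noting that $a_\phi\varphi(a_\phi)^{-1}\in H$ by Lemma~\ref{lema1}(a). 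Uniqueness of this normal form follows from $AD\cap H=\{I\}$ (from Proposition~\ref{prop5}) together with the structure $AD\cong \mathrm{Hol}(G)$ of Proposition~\ref{prop1}(c), so the assignment $d_t\varphi a_\phi \mapsto (a_\phi, d_t\varphi)$ is a well-defined bijection onto $A\times DH$.

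Next I would verify that $\psi$ respects multiplication. On the $ADH$ side one multiplies $d_t\varphi a_\phi \cdot d_{t'}\varphi' a_{\phi'}$ and moves the middle $a_\phi$ rightward past $d_{t'}$ and $\varphi'$, obtaining $d_t\varphi\, d_{\phi(t')}\,\bigl(a_\phi\varphi'(a_\phi)^{-1}\bigr)\,a_{\phi\phi'}$, which is already in normal form since $d_t\varphi d_{\phi(t')}(a_\phi\varphi'(a_\phi)^{-1})\in DH$ (using that $DH$ is a group, which is itself a consequence of Lemma~\ref{lema1}(b) — one checks $(d_t)^{\pm1}\varphi d_t\in H$, so $D$ normalizes $H$ inside $\mathit{Half}(L)$ and $DH$ is closed). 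Applying $\psi$ gives $(a_{\phi\phi'},\, d_t\varphi d_{\phi(t')}a_\phi\varphi'(a_\phi)^{-1})$, and this matches exactly the semidirect-product operation $(a_\phi,d_t\varphi)\stackrel{\sigma}{\cdot}(a_{\phi'},d_{t'}\varphi')$ with $\sigma_\phi(d_{t'}\varphi') = d_{\phi(t')}a_\phi\varphi'(a_\phi)^{-1}$. Since $\psi$ is a bijective homomorphism, it is an isomorphism, giving the first claimed isomorphism $ADH\cong A\stackrel{\sigma}{\ltimes} DH$.

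For the second isomorphism $A\stackrel{\sigma}{\ltimes} DH\cong Aut(G)\stackrel{\sigma}{\ltimes} DH$, I would invoke $A\cong Aut(G)$ from Proposition~\ref{prop1}(c) via $a_\phi\mapsto\phi$, and transport $\sigma$ along this isomorphism: define $\bar\sigma\colon Aut(G)\to Aut(DH)$ by $\bar\sigma(\phi)=\sigma_\phi$. The identities $\sigma_\phi\sigma_{\phi'}=\sigma_{\phi\phi'}$ (already noted in the excerpt) show $\bar\sigma$ is a homomorphism, and the map $(a_\phi,x)\mapsto(\phi,x)$ is then an isomorphism of semidirect products. This last part is routine functoriality of the semidirect-product construction under isomorphisms of the acting group.

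The main obstacle is the well-definedness and uniqueness of the normal form $d_t\varphi a_\phi$ — i.e. that $ADH = D\cdot H\cdot A$ as a product set and that this factorization is unique. Closure under multiplication requires that $A$ and $D$ each normalize $H$ (Lemma~\ref{lema1}), that $D$ normalizes $H$ well enough that $DH$ is a subgroup, and that $A$ normalizes $DH$; uniqueness requires knowing $DH\cap A=\{I\}$ and $D\cap H=\{I\}$, which reduce to $AD\cap H=\{I\}$ (Proposition~\ref{prop5}) and $A\cap D=\{I\}$ (Proposition~\ref{prop1}(a)). Everything else — the homomorphism property of $\psi$ and the transport along $A\cong Aut(G)$ — is a direct computation using the commutation rules of Proposition~\ref{prop1}(b), so I would keep those brief.
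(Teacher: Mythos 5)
Your proposal is correct and follows essentially the same route as the paper: the paper likewise defines $\sigma_\phi(d_t\varphi)=a_\phi d_t\varphi(a_\phi)^{-1}=d_{\phi(t)}a_\phi\varphi(a_\phi)^{-1}$ (well-defined by Lemma~\ref{lema1}), forms $A\stackrel{\sigma}{\ltimes}DH$, and checks that $\psi(d_t\varphi a_\phi)=(a_\phi,d_t\varphi)$ is an isomorphism via the identical commutation computation, with $H\cap AD=\{I\}$ (Proposition~\ref{prop5}) and $A\cap D=\{I\}$ supplying uniqueness of the normal form. Your write-up is simply more explicit about the existence and uniqueness of the factorization $D\cdot H\cdot A$, which the paper leaves implicit.
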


Now, we will describe the group $DH$.

\begin{prop}
\label{prop10} $Z = \{d_z \,\,|\,\, z\in \mathcal{Z}(G)\} \leq \mathcal{Z}(DH)$.
\end{prop}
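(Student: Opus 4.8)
The plan is to verify directly that each $d_z$ with $z\in\mathcal{Z}(G)$ commutes with both generators types of $DH$, namely the elements $d_t$ with $t\in G$ and the elements $\varphi\in H$. First I would record that $Z\leq D$, so by Proposition~\ref{prop1}(b) we have $d_zd_t=d_{zt}=d_{tz}=d_td_z$ since $z$ is central in $G$; hence $Z$ commutes elementwise with $D$, and in particular $Z\leq D\leq AD$ is a subgroup of $\mathit{Half}(L)$. So the substance is to show $d_z\varphi=\varphi d_z$ for every $\varphi\in H$ and every $z\in\mathcal{Z}(G)$.

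For that, I would evaluate both composites on the generating set $G\cup\{gu: g\in G\}$ of $L$. On any $g\in G$ both $d_z$ and $\varphi$ act as the identity, so $d_z\varphi(g)=g=\varphi d_z(g)$ trivially. On an element $gu$: the key fact is Theorem~\ref{prop7}, which says $\mathcal{Z}(G)$ has exponent $2$, so $z^{-1}=z$ for all $z\in\mathcal{Z}(G)$, and moreover the proof of Theorem~\ref{prop7} established $\mathcal{Z}(G)\cap\gamma_\varphi=\emptyset$. By \eqref{eq1}, $\varphi(gu)=g^{\epsilon}u$ for some $\epsilon\in\{-1,1\}$. Then $d_z\varphi(gu)=d_z(g^{\epsilon}u)=(zg^{\epsilon})u$, while $\varphi d_z(gu)=\varphi((zg)u)$, which by \eqref{eq1} equals $(zg)u$ or $(zg)^{-1}u=g^{-1}z^{-1}u=g^{-1}zu$. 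I must check these agree. The point is that $\varphi$ sends $gu\mapsto g^{-1}u$ exactly when $g\in\gamma_\varphi$ (equivalently $o(g)=4$ and $\varphi$ inverts it); by Proposition~\ref{prop6}(d), since $z$ is central of order dividing $2$ (so $o(z)\neq 4$), we have $\gamma_\varphi z=\gamma_\varphi$. Hence $g\in\gamma_\varphi\iff zg\in\gamma_\varphi$, which forces $\varphi((zg)u)=(zg)^{-1}u$ precisely when $\varphi(gu)=g^{-1}u$, i.e. the two cases are correlated; and in the remaining cases both give $(zg)u$. Matching up: if $g\notin\gamma_\varphi$ then $\epsilon=1$ and $\varphi(gu)=gu$, $zg\notin\gamma_\varphi$ so $\varphi((zg)u)=(zg)u$, giving $(zg)u=(zg)u$; if $g\in\gamma_\varphi$ then $\epsilon=-1$, $\varphi(gu)=g^{-1}u$, so $d_z\varphi(gu)=(zg^{-1})u=(g^{-1}z)u$ using $z$ central, while $zg\in\gamma_\varphi$ gives $\varphi d_z(gu)=\varphi((zg)u)=(zg)^{-1}u=g^{-1}z^{-1}u=g^{-1}zu$; these agree since $z=z^{-1}$ and $z$ is central. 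So $d_z\varphi=\varphi d_z$ on all of $L$.

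Finally I would assemble: every element of $DH$ is a product of elements $d_t$ and $\varphi\in H$ (indeed $DH=\{d_t\varphi\}$ as used in the paragraph before Proposition~\ref{prop9}), and since $d_z$ commutes with each $d_t$ and each $\varphi$, it commutes with every element of $DH$; together with $Z\subseteq DH$ this gives $Z\leq\mathcal{Z}(DH)$. The main obstacle is the bookkeeping in the $gu$ case: one must use all three ingredients together — that $\mathcal{Z}(G)$ has exponent $2$ (Theorem~\ref{prop7}), that $z$ is central in $G$, and the invariance $\gamma_\varphi z=\gamma_\varphi$ from Proposition~\ref{prop6}(d) — to see that the $\epsilon=-1$ subcase is not merely "$(zg^{-1})u$ versus $(zg)^{\pm1}u$" but genuinely matches. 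Everything else is routine manipulation with the multiplication rules $(1)$ and the relations in Proposition~\ref{prop1}(b).
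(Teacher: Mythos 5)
Your proposal is correct and follows essentially the same route as the paper's proof: commutation with $D$ via Proposition~\ref{prop1}(b) and centrality of $z$, and commutation with $H$ by a case split on $g\in\gamma_\varphi$ versus $g\notin\gamma_\varphi$, using $\gamma_\varphi z=\gamma_\varphi$ from Proposition~\ref{prop6}(d) together with $z=z^{-1}$ from Theorem~\ref{prop7}. The only difference is that you spell out explicitly the bookkeeping (e.g.\ $z=z^{-1}$ and $\mathcal{Z}(G)\cap\gamma_\varphi=\emptyset$) that the paper leaves implicit.
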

\begin{proof}
Let $z\in \mathcal{Z}(G)$ and $d_t\in D$. Then, $d_zd_t = d_{zt}= d_{tz} = d_td_z$.

Now, let $\varphi \in H$. Since $o(z)\in \{1,2\}$, $z \gamma_\varphi = \gamma_\varphi$ by the item (d) of Proposition \ref{prop6}. If $g\in \gamma_\varphi$, then $zg\in \gamma_\varphi$, and so $d_z\varphi (gu) = (zg^{-1})u = (g^{-1}z^{-1})u = \varphi d_z(gu)$. If $g\in G \setminus \gamma_\varphi$, then $zg\in G \setminus \gamma_\varphi$, and hence $d_z\varphi (gu) = (zg)u = \varphi d_z(gu)$.

Therefore, $Z \subset \mathcal{Z}(DH)$. Since $Z \cong \mathcal{Z}(G)$, we conclude that $Z \leq \mathcal{Z}(DH)$.
\end{proof}

Since $Z \subset D$, it follows that $Z\cap H = \{I\}$. By Proposition \ref{prop10}, $ZH \leq DH$.

For $x\in G$, the mapping $I_x: G \rightarrow G$, defined by $I_x(g) = gxg^{-1}$, is an inner automorphism of $G$. Let $\mathcal{I}(G) =\{I_x\,\,|\,\,x\in G\}$ be the inner automorphism group of $G$. It is well known that $G/\mathcal{Z}(G) \cong \mathcal{I}(G)$.

\begin{prop}
\label{prop11} We have:

\noindent{}a) $ZH \cong C_2^n$, where $|\mathcal{Z}(G)| = 2^m$, $|H| = 2^r$, and $n = m + r$,
\\
b) $ZH \triangleleft DH$,
\\
c) $DH / ZH \cong \mathcal{I}(G)$.
\end{prop}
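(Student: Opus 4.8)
The plan is to treat the three claims in turn, leaning on facts already in hand: $Z\cong\mathcal{Z}(G)\cong C_2^m$ by Theorem~\ref{prop7}, $H\cong C_2^r$ by Proposition~\ref{prop52}, $Z\leq\mathcal{Z}(DH)$ by Proposition~\ref{prop10}, $Z\cap H=\{I\}$, $H\cap D=\{I\}$ (a consequence of $H\cap AD=\{I\}$, which follows from Proposition~\ref{prop5}), $D\cong G$ from Proposition~\ref{prop1}, and the standard isomorphism $G/\mathcal{Z}(G)\cong\mathcal{I}(G)$.

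Claim (a) is essentially immediate: since $Z$ is central in $DH$ it centralizes $H$, and $Z\cap H=\{I\}$, so the internal product $ZH$ is a direct product $Z\times H\cong C_2^m\times C_2^r\cong C_2^{m+r}$, i.e. $ZH\cong C_2^n$ with $n=m+r$.

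For (b) it suffices to show that $D$ and $H$ each normalize $ZH$, since they generate $DH$. Normalization by $H$ is trivial because $H$ is abelian and $Z$ is central. For $D$, fix $d_t\in D$ and $d_z\varphi\in ZH$; since $d_z$ is central, the problem reduces to showing $d_t\varphi d_t^{-1}\in ZH$, and here I would split on whether $t\in\gamma_\varphi$. If $t\notin\gamma_\varphi$, then $t^{-1}\notin\gamma_\varphi$ by Proposition~\ref{prop6}(b), so applying Lemma~\ref{lema1}(b) to $t^{-1}$ yields $d_t\varphi d_t^{-1}=(d_{t^{-1}})^{-1}\varphi\,d_{t^{-1}}\in H\subseteq ZH$. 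If $t\in\gamma_\varphi$, then $o(t)=4$ and Lemma~\ref{lema1}(b) gives $d_t\varphi d_t=h\in H$, whence $d_t\varphi d_t^{-1}=h\,d_t^{-2}=h\,d_{t^{-2}}$; since $t^2\in\mathcal{Z}(G)$ by Proposition~\ref{prop6}(c) we get $d_{t^{-2}}\in Z$, so $d_t\varphi d_t^{-1}\in HZ=ZH$. Hence $ZH\triangleleft DH$.

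For (c) I would use the homomorphism $D\to DH/ZH$, $d_t\mapsto d_t\,ZH$. It is surjective because every element of $DH$ has the form $d_t\varphi$ with $\varphi\in H\subseteq ZH$. Its kernel is $D\cap ZH$, and this equals $Z$: if $d_t=d_z\varphi$ with $z\in\mathcal{Z}(G)$ and $\varphi\in H$, then $\varphi=d_{z^{-1}t}\in D\cap H=\{I\}$, so $d_t=d_z\in Z$. Therefore $DH/ZH\cong D/Z$, and since the isomorphism $D\cong G$ of Proposition~\ref{prop1} sends $Z$ onto $\mathcal{Z}(G)$, we conclude $DH/ZH\cong G/\mathcal{Z}(G)\cong\mathcal{I}(G)$. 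The only place where genuine work is needed is the conjugation computation in (b) for the subgroup $D$, specifically the case $t\in\gamma_\varphi$: there Lemma~\ref{lema1}(b) produces $d_t\varphi d_t$ rather than a true conjugate, and one must use the centrality of $t^2$ to absorb the surplus factor $d_{t^{-2}}$ into $Z$ — this is exactly why $ZH$, not $H$ itself, is the normal subgroup. Everything else is routine bookkeeping with the commutation and intersection facts already established.
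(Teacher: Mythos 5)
Your proof is correct. Parts (a) and (b) follow the paper's argument essentially verbatim: the direct-product decomposition $ZH\cong Z\times H$ from centrality of $Z$ and $Z\cap H=\{I\}$, and for normality the same splitting on $t\in\gamma_\varphi$ versus $t\notin\gamma_\varphi$, using Lemma~\ref{lema1}(b) together with $t^2\in\mathcal{Z}(G)$ to absorb the surplus central factor $d_{t^{\pm 2}}$ into $Z$ (you multiply it on the right where the paper puts it on the left; this is immaterial). Your part (c), however, takes a genuinely different and somewhat cleaner route. The paper defines an explicit map $\psi\colon DH\to\mathcal{I}(G)$ by $\psi(d_t\varphi)=I_t$ and verifies directly that it is a homomorphism, which forces another case analysis on whether $t'\in\gamma_\varphi$ (using $I_{t'}=I_{t'^{-1}}$ when $t'^2$ is central) before computing $\mathrm{Ker}(\psi)=ZH$. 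You instead apply the second isomorphism theorem to the subgroup $D$: the composite $D\hookrightarrow DH\to DH/ZH$ is surjective since $H\subseteq ZH$, its kernel $D\cap ZH$ collapses to $Z$ via $D\cap H=\{I\}$, and then $DH/ZH\cong D/Z\cong G/\mathcal{Z}(G)\cong\mathcal{I}(G)$. This confines all the $\gamma_\varphi$ casework to part (b) and replaces the paper's hands-on homomorphism verification with standard isomorphism-theorem bookkeeping; the paper's version, in exchange, exhibits the quotient map concretely as $d_t\varphi\mapsto I_t$, which is mildly more informative. Both arguments are complete and rest on the same prior results.
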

\begin{proof}
a) Since $Z \leq \mathcal{Z}(DH)$ and $Z\cap H = \{I\}$, we have $ZH \cong Z\times H$. The result follows from Proposition \ref{prop52} and Theorem \ref{prop7}.\\

\noindent{}b) Let $z\in \mathcal{Z}(G)$, $t\in G$, and $\varphi,\varphi'\in H$. Then,

\begin{center}
$\varphi' d_z \varphi (\varphi')^{-1} = d_z \varphi' \varphi \varphi' = d_z\varphi$ \hspace{0.1cm} and \hspace{0.1cm} $d_t d_z \varphi d_{t^{-1}}  = d_z d_t \varphi d_{t^{-1}} = d_{zt^2} d_{t^{-1}} \varphi d_{t^{-1}}$.
\end{center}

It is clear that $\varphi' d_z \varphi (\varphi')^{-1} \in ZH$. If $t\in \gamma_\varphi$, then $t^2 \in \mathcal{Z}(G)$ and $d_{t^{-1}} \varphi d_{t^{-1}} \in H$ by Proposition \ref{prop6} and Lemma \ref{lema1}. If $t\not \in \gamma_\varphi$, then $d_t \varphi d_{t^{-1}} \in H$ by Lemma \ref{lema1}. Thus, $d_t d_z \varphi d_{t^{-1}}  \in ZH$, for all $t\in G$. Therefore, $ZH \triangleleft DH$.\\

\noindent{}c) Define $\psi: DH \rightarrow \mathcal{I}(G)$ by $\psi(d_t\varphi) = I_t$. It is clear that $\psi$ is surjective. 

Let $t,t'\in G$ and $\varphi,\varphi'\in H$. If $t'\in \gamma_\varphi$, then $t'^2 \in \mathcal{Z}(G)$, and hence $I_{t'}(g) = t'gt'^{-1} = t'^{-1}t'^2gt'^{-1} = t'^{-1}gt'^2t'^{-1} = I_{t'^{-1}}(g)$, for all $g\in G$. Since $d_{t'} \varphi d_{t'} \in H$, we have 

\begin{center}
$\psi(d_t\varphi d_{t'}\varphi') = \psi(d_{tt'^{-1}}d_{t'}\varphi d_{t'}\varphi') = I_{tt'^{-1}} = I_tI_{t'^{-1}} = I_tI_{t'} = \psi(d_t\varphi)\psi(d_{t'}\varphi')$.
\end{center}

If $t'\not \in \gamma_\varphi$, then $d_{t'^{-1}} \varphi d_{t'} \in H$. Thus, 

\begin{center}
$\psi(d_t\varphi d_{t'}\varphi') = \psi(d_{tt'}d_{t'^{-1}}\varphi d_{t'}\varphi') = I_{tt'} = I_tI_{t'} = \psi(d_t\varphi)\psi(d_{t'}\varphi')$.
\end{center}

Hence, $\psi$ is a homomorphism.

Suppose that $\psi(d_t\varphi) = I$. Then, $g = I_t(g) = tgt^{-1}$, for all $g\in G$. Thus, $t \in \mathcal{Z}(G)$, and hence $Ker(\psi) = ZH$. Therefore, $DH / ZH \cong \mathcal{I}(G)$.
\end{proof}

For $t\in G$, define $\rho_t: ZH \rightarrow ZH$ by $\rho_t(d_z\varphi) = d_t d_z\varphi d_{t^{-1}}$, where $d_z\in Z$ and $\varphi \in H$. By the item (b) of Proposition \ref{prop11}, $\rho_t$ is well-defined, and then it is the restriction of an inner automorphism of $DH$ to $ZH$. Thus, $\rho_t \in Aut(ZH)$ and $\rho_t\rho_{t'} = \rho_{tt'}$, for any $t,t'\in G$.

Define $\rho: G \rightarrow Aut(ZH)$ by $\rho(t) = \rho_t$. Then, $\rho$ is a homomorphism. Thus, we can construct the semidirect product $G \stackrel{\rho}{\ltimes} ZH$, where the operation is given by:

\begin{center}
$(t,\alpha)\stackrel{\rho}{\cdot} (t',\alpha') = (tt',\alpha d_t \alpha' d_{t^{-1}})$ \hspace{0.3cm} $(t,t' \in G,\,\, \alpha,\alpha'\in ZH)$
\end{center}

We can now state and prove the main theorem of this paper.

\begin{teo}
\label{teo3} Let $G$ be a finite nonabelian group such that $M(G,2)$ has a nontrivial half-automorphism. Then,

\begin{center}
$\mathit{Half}(M(G,2)) \cong C_2 \times (Aut(G)\stackrel{\sigma}{\ltimes} DH)$

\end{center}

\noindent{}where $DH \cong (G \stackrel{\rho}{\ltimes} ZH))/W$, $ZH \cong C_2^n$ and $W = \{(z,d_z)\,\,|\,\,z\in \mathcal{Z}(G)\} \cong \mathcal{Z}(G)$. Furthermore, $|\mathit{Half}(M(G,2))| = 2 \, |Aut(G)| \, |G|\, |H| = 2^{n+1} \, |Aut(G)|\, |\mathcal{I}(G)|$.
\end{teo}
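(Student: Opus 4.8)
The first isomorphism $\mathit{Half}(M(G,2)) \cong C_2 \times (Aut(G)\stackrel{\sigma}{\ltimes} DH)$ is immediate from \eqref{eq2} together with Proposition~\ref{prop9}, and the order formula at the end will fall out once $DH$ is understood. So the core of the argument is to establish $DH \cong (G \stackrel{\rho}{\ltimes} ZH)/W$, and the plan is to exhibit an explicit surjection $G \stackrel{\rho}{\ltimes} ZH \to DH$ with kernel $W$.

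Concretely, I would define $\Theta \colon G \stackrel{\rho}{\ltimes} ZH \to DH$ by $\Theta(t,\alpha) = \alpha d_t$ and first check that $\Theta$ is a homomorphism. Using $d_s d_{s'} = d_{ss'}$, $d_1 = I$ (Proposition~\ref{prop1}) and the definition of $\rho$, one computes $\Theta\big((t,\alpha)\stackrel{\rho}{\cdot}(t',\alpha')\big) = \Theta\big(tt',\,\alpha d_t\alpha' d_{t^{-1}}\big) = \alpha d_t\alpha' d_{t^{-1}} d_t d_{t'} = \alpha d_t\alpha' d_{t'} = \Theta(t,\alpha)\,\Theta(t',\alpha')$. (It is precisely this $d_{t^{-1}}d_t$ cancellation that forces the choice $\Theta(t,\alpha)=\alpha d_t$ rather than $d_t\alpha$.) Surjectivity is then easy: the image of $\Theta$ is a subgroup of $DH$ containing $D$ (take $\alpha = I$) and $H$ (take $t = 1$, using $H \subseteq ZH$), hence it contains $\langle D,H\rangle = DH$, and since it is also contained in $DH$ it equals $DH$.

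Next I would compute $\ker\Theta$. If $\Theta(t,\alpha)=\alpha d_t = I$, then $\alpha = d_{t^{-1}} \in ZH \cap D$; writing a general element of $ZH$ as $d_z\varphi$ with $z\in\mathcal{Z}(G)$, $\varphi\in H$, the equality $d_{t^{-1}} = d_z\varphi$ forces $\varphi = d_{z^{-1}t^{-1}} \in D\cap H = \{I\}$, so in fact $ZH\cap D = Z$ and $t^{-1}\in\mathcal{Z}(G)$. By Theorem~\ref{prop7}, $\mathcal{Z}(G)$ has exponent $2$, so $t = t^{-1}$ and $\alpha = d_{t^{-1}} = d_t$; hence $(t,\alpha) = (t,d_t)$ with $t\in\mathcal{Z}(G)$, i.e.\ $\ker\Theta = W$. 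A short direct computation (using $zz'z^{-1}=z'$ for $z,z'\in\mathcal{Z}(G)$) gives $(z,d_z)\stackrel{\rho}{\cdot}(z',d_{z'}) = (zz',d_{zz'})$, which confirms that $W$ is a subgroup and that $(z,d_z)\mapsto z$ is an isomorphism $W\cong\mathcal{Z}(G)$. The first isomorphism theorem then yields $DH \cong (G\stackrel{\rho}{\ltimes}ZH)/W$.

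For the order statement, I would assemble: $|\mathit{Half}(M(G,2))| = 2\,|ADH| = 2\,|Aut(G)|\,|DH|$ by \eqref{eq2} and Proposition~\ref{prop9}, while $|DH| = |G|\,|ZH|/|W| = |G|\cdot 2^{n}/2^{m} = |G|\,|H|$ since $n = m+r$ and $|H| = 2^{r}$ (Propositions~\ref{prop52} and~\ref{prop11}); finally $|G| = |\mathcal{Z}(G)|\,|\mathcal{I}(G)| = 2^{m}\,|\mathcal{I}(G)|$ rewrites $|G|\,|H| = 2^{n}\,|\mathcal{I}(G)|$, giving $2^{n+1}\,|Aut(G)|\,|\mathcal{I}(G)|$. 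The only genuinely delicate steps are guessing the correct form of $\Theta$ so that it is multiplicative, and the kernel computation — which is exactly where the exponent-$2$ property of $\mathcal{Z}(G)$ from Theorem~\ref{prop7} is indispensable; everything else is routine bookkeeping with the relations of Proposition~\ref{prop1} and the structural facts in Propositions~\ref{prop10} and~\ref{prop11}.
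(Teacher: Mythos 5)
Your proposal is correct and takes essentially the same approach as the paper: the identical map $(t,\alpha)\mapsto \alpha d_t$, the same multiplicativity check via $\alpha d_t\alpha' d_{t'} = \alpha d_t\alpha' d_{t^{-1}}d_{tt'}$, and the same kernel computation resting on $H\cap D=\{I\}$ and the exponent-$2$ property of $\mathcal{Z}(G)$. You merely spell out the surjectivity and the final order bookkeeping, which the paper leaves as immediate from \eqref{eq2} and Propositions~\ref{prop9} and~\ref{prop11}.
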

\begin{proof}
Define $\psi: G \stackrel{\rho}{\ltimes} ZH \rightarrow  DH$ by $\psi((t,\alpha)) = \alpha d_t$. It is clear that $\psi$ is surjective. Since $\alpha d_t \alpha' d_{t'} = \alpha d_t \alpha'd_{t^{-1}} d_{tt'}$, for all $\alpha,\alpha' \in ZH$ and $t,t'\in G$, it follows that $\psi$ is a homomorphism. 

Suppose that $\psi((t,d_z\varphi)) = I$, for some $t\in G$, $z\in \mathcal{Z}(G)$ and $\varphi \in H$. Then, $I = d_z\varphi d_t = \varphi d_{zt}$. Since $H\cap D = \{I\}$, we have $\varphi = I$ and $zt = 1$. Thus, $Ker(\psi) = \{(z,d_z)\,|\, z\in \mathcal{Z}(G)\}$. Therefore, $DH \cong (G \stackrel{\rho}{\ltimes} ZH))/W$, where $W = Ker(\psi)$.

For $(z,d_z),(z',d_{z'})\in W$, we have that $(z,d_z)\stackrel{\rho}{\cdot} (z',d_{z'}) = (zz',d_{zz'})$. Hence, $W \cong \mathcal{Z}(G)$.

The rest of the claim follows from \eqref{eq2} and Propositions \ref{prop9} and \ref{prop11}.
\end{proof}

In the following, some examples of Chein loops that have nontrivial half-automorphisms are provided. The generators of $H$ in these examples were presented in \cite[Examples 8 and 9]{GG13}. In each example, the order of $H$ was obtained by using GAP \cite{gap} computing with the LOOPS package \cite{NV1}.

\begin{exem} Let $Q_8 = \{1,i,j,k,-1,-i,-j,-k\}$ be the quaternion group, where the operation is given by the rules: $i^2 = j^2 = k^2 = -1$, $(-1)^2 = 1$, $ij = k = -ji$, $jk = i = -kj$, and $ki = j = -ik$. The group $H$ has order $8$ and is generated by the nontrivial half-automorphisms

\begin{center}
$\varphi_x(gu^l) = \left\{\begin{array}{rl}
g, & \textrm{if } l = 0,\\
gu, & \textrm{if } l = 1\, \textrm{ and } g \not \in \{x,-x\}, \\
(-g)u, & \textrm{if } l = 1\, \textrm{ and } g \in \{x,-x\},
\end{array}\right.$
\end{center}

\noindent{}where $x\in \{i,j,k\}$. Since $|Aut(Q_8)| = 24$, we have $|\mathit{Half}(M(Q_8,2))|  = 3072$, $|\mathit{Half}_T(M(Q_8,2))| = 384$ and $|Aut(M(Q_8,2))| = 192$. Then, $M(Q_8,2)$ has $2688$ nontrivial half-automorphisms.

\end{exem}

\begin{exem} Let $G = C_4 \ltimes C_3$ be the nonabelian semidirect product between the cyclic groups of orders $4$ and $3$. In this case, $H = \{I,\varphi\}$, where $\varphi$ is the nontrivial half-automorphism defined by

\begin{center}
$\varphi(gu^i) = \left\{\begin{array}{rl}
g, & \textrm{if } i = 0,\\
gu, & \textrm{if } i = 1\,\textrm{ and } o(g)\not = 4, \\
(g^{-1})u, & \textrm{if } i = 1\,\textrm{ and } o(g) = 4.
\end{array}\right.$
\end{center}

Since $|Aut(G)| = 12$, we have $|\mathit{Half}(M(G,2))|  = 576$, $|\mathit{Half}_T(M(G,2))| = 288$ and \\$|Aut(M(G,2))| = 144$. Then, $M(G,2)$ has $288$ nontrivial half-automorphisms.
\end{exem}

\section*{Acknowledgments}
Some calculations in this work have been made by using the LOOPS package \cite{NV1} for GAP \cite{gap}.

\addcontentsline{toc}{section}{Acknowledgments}

\end{document}